\newtheorem{theorem}{Theorem}
\newtheorem{corollary}{Corollary}
\newtheorem{proposition}{Proposition}
\theoremstyle{definition}
\newtheorem{remark}{Remark}
\def \bf{\mathbf}
\def \a{\alpha}         
\def \b{\beta}           
\def \th{\theta}       
\def\u{{\bf u}}
\newcommand {\q} {\mathbf{q}}
\title{
Classification of  stacked central configurations in $R^3$}
\begin{document}
\maketitle
\markboth{Xiang Yu and Shuqiang Zhu}{Classification of  stacked  CC in $R^3$}
\vspace{-0.5cm}
\author       
\bigskip
\begin{center}
	{Xiang Yu$^1$  and Shuqiang Zhu$^2,$}\\
	\bigskip
	$^1$School of Economic and Mathematics, Southwestern
University of Finance and Economics, Chengdu 611130, P.R. China \\
	$^2$School of Mathematical Sciences, University of Science and Technology of China, Hefei 230026,   P.R. China\\

	yuxiang@swufe.edu.cn, zhus@ustc.edu.cn
\end{center}

\begin{abstract}
	We classify the extensions of $n$-body central configurations to $(n+1)$-body central configurations in $R^3$, in both the collinear case and the non-collinear case.  We completely solve  the two open questions posed by Hampton (Nonlinearity 18: 2299-2304, 2005).  This classification is  related with  study on co-circular and co-spherical  central  configurations. We also obtain a general property of   co-circular central  configurations. 

\end{abstract}

\noindent \textbf{Key Words:} Newtonian n-body problem; stacked central configurations; co-circular configurations; co-spherical configurations; pyramidal central configurations; perverse solutions.

\section{Introduction}\label{Sec:int}

Central configurations are important in the classical N-body problems. They naturally arise in the study of the self-similar solutions, and they are involved in the classification of the topology of integral manifolds \cite{Sma70-2}.  In the  collection of important open problems in celestial mechanics compiled by Albouy-Cabral-Santos \cite{ACS12}, half of the list   is  on central configurations.  Readers are referred to \cite{Alb03, ACS12, AK12, LMS15, Saa80} for introductions, recent advance and open questions. 

The $(n+k)$-body   central configurations  extended from  $n$-body central configurations  by adding  $k$ bodies   are called \emph{stacked central configurations}.  For  instance, the Lagrangian equilateral triangle central configuration is a stacked central configuration.  It is also  well-known that  a \emph{pyramidal central configuration} can be obtained by adding one mass to a co-circular central configuration \cite{Alb03, Fay96, OXZ04}.   Hampton  introduced    stacked central configurations in 2005 \cite{Ham05}. 
 Many other examples of stacked central configurations were constructed, see \cite{ CLP14,  DS15, HS07,  OC12}.  
 Hampton  also raises two questions regarding stacked central configuration   \cite{Ham05}:
 \begin{enumerate}
 	\item In addition to symmetric collinear configurations\footnote{According to Theorem \ref{thm:collinear},  there is no  five-body  collinear central configuration with a subset forming a  four-body central configuration.  },   the square or a regular tetrahedron with a mass at its center and the square pyramidal configuration  are there any five-body central configurations with a subset forming a four-body central configuration?
 	\item  Are there any five-body   non-collinear central configurations all of whose four-body subsets form a central configuration?
 \end{enumerate}
 There are some works devoted to  this two questions. Assuming that a  five-body central configuration is co-planar and non-collinear, in 2013, Fernandes-Mello \cite{FM13} and Alvarez Ram\'{i}rez-Santos-Vidal \cite{ASV13} announced independently that such configuration must be a square with equal masses and one mass at the center of the square.  Though  the paper  \cite{FM13} contains several inspirational observations, some  argument   is problematic. In 2018,  Fernandes-Mello \cite{FM18} fix the proof,   see Remark \ref{rem:fm2}.   However, the two questions remain open. 
 
 In this work, we classify the ways by which  an $n$-body  $(n\ge 2)$ central configuration can extend to an $(n+1)$-body central configuration in $R^3$. With this classification, we solve the two open questions completely.  We also find one general property of co-circular central configurations. It plays a crucial role in our study of the extensions 
 of co-circular central configurations.

 There are two cases. Firstly, the extended $(n+1)$-body central configuration is collinear. In this  case, we show that  extensions happen only for $n= 2$. So the question of collinear extensions has already been answered by  Euler \cite{Eul}.   
 Secondly, the extended $(n+1)$-body central configuration is non-collinear. In this case, it has been proved by Fernandes-Mello \cite{FM13-1} that   it is necessary that 
  the $n$-body central configuration  lies on a  common circle or sphere. Our  approach  is different from theirs. Our results contain not only the necessary conditions, but  sufficient conditions as well.  Thus, we  can classify all the extensions.   
  
   Let $r$ be the radius of the circle (sphere) and $r_0=(\frac { m}{ \lambda})^{\frac{1}{3}}$,  the cubic root of the ratio of total mass $m$  and the multiplier $\lambda$ of the  $n$-body configuration. The co-circular  (co-spherical) central configurations can extend if their mass center equals their geometric center. They can also extend if  $r_0\ge r$ for the co-circular case, and  
    $r_0=r$ for the co-spherical  case. 

Thus,  the measurement of $r$ and $r_0$ of the co-circular and co-spherical central  configurations is important.  We obtain a general result for  the  co-circular  ones, with which  we  prove that $r_0>r$ holds for all four, five, and six-body co-circular central configurations.  Together with the works of Hampton \cite{Ham03} and  Cors-Roberts \cite{CR12} on the co-circular four-body problem, we  find all   the extensions of four-body central configurations to five-body central configurations.  And we  answer  Hampton's two questions completely. 



The paper is organized as follows. In Section \ref{sec:mainthm}, we state the main results. In Section \ref{sec:pf}, we prove the main results. 
In Section \ref{sec:345}, we discuss the extensions of  two, three,  four, and five-body central configurations. In Section \ref{sec:co-s}, we find several examples of co-spherical central configurations whose mass center equals the geometric center.

 \section{ Main Results}\label{sec:mainthm} 
 

 We are interested in the $n$-body  central configurations that can extend to   $(n+1)$-body central configurations by adding one mass.   If there is no confusion raised, the $n$ masses are 
$m_1, ..., m_n$ and the corresponding configuration is  $\q=(\q_1, ..., \q_n)$.  We denote by   $m_0$  the added  mass and  by   $\q_0$ its position.  We denote by $\bar{\q}=  (\q_0, \q_1, ..., \q_n)$ the extended configuration. We will also call the original $n$-body  configuration $\q$  the sub configuration.   We use $r_{ij}$ to denote the distance between any two of the $n+1$ particles, i.e., $ r_{ij}= |\q_i-\q_j|$,    $ 0\le i<j\le n$.  We denote by $m$ the sum of the $n$ masses, and by $\bar m$ the sum of the $n+1$ masses, i.e, 
\[  m=\sum_{i=1}^{n}m_i, \  \bar m=\sum_{i=0}^{n}m_i=m_0 +m.   \]
 We denote by $c$ the mass center  of the $n$-body sub configuration, and $\bar c$ the mass center  of the $(n+1)$-body configuration, i.e, 
 \[  c=\frac{\sum_{i=1}^{n} m_i \q_i }{m},   \ \bar c=\frac{\sum_{i=0}^{n} m_i \q_i }{\bar m}=  \frac{m c+m_0\q_0}{m+m_0}.   \]
 Denote by $U, I$ and $\bar U, \bar I$ the force function and the momentum of inertia of the sub $n$-body system and the  $(n+1)$-body system respectively, i.e.,  
 \begin{align*}
 &U=\sum_{1 \le i<  j \le n}  \frac{m_im_j}{r_{ij}},    \ \   I=\sum_{1 \le i<  j \le n}  \frac{m_im_j}{m}|\q_i-\q_j|^2=\sum_{i=1} ^n m_i|\q_i-c|^2;\\
 &\bar U=\sum_{0 \le i<  j \le n}  \frac{m_im_j}{r_{ij}} =U + \sum_{i=1}^n \frac{m_0m_i}{r_{i0}},  \\
 & \bar I =\sum_{0 \le i<  j \le n}  \frac{m_im_j}{\bar m}|\q_i-\q_j|^2=\sum_{i=0} ^n m_i|\q_i-\bar c|^2. 
 \end{align*} 
  We assume that both  the  $(n+1)$-body configuration  and the  $n$-body sub  configuration are central. That is, the configurations $\bar \q$ and $\q$ satisfy the following two systems simultaneously, 
 \begin{equation} \label{equ:cc0}   
  \nabla  U(\q) + \lambda/2 \nabla  I(\q) =0, \    \nabla  \bar U(\bar \q) + \bar \lambda/2 \nabla  \bar  I( \bar \q)=0,      
   \end{equation} 
where $ \lambda = U / I  $  and  $ \bar \lambda =\bar U / \bar I $.

 Our first result concerns the case that  the $(n+1)$-body central configuration is collinear.
 \begin{theorem}\label{thm:collinear}
 	Assume that $n\ge2$.  Suppose that 	an $n$-body  collinear  central configuration can extend to an $(n+1)$-body collinear  central configuration by adding one mass, then $n= 2$. 
 \end{theorem}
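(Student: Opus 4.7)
The plan is to collapse the two central-configuration systems into a single \emph{linear-versus-nonlinear} identity on the sub-configuration, and then to bound $n$ by exploiting the sign, monotonicity, and convexity of the nonlinear side.

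Parameterize the common line by a single coordinate, so that $\q_i=(x_i,0,0)$ for $0\leq i\leq n$; the two perpendicular components of every CC equation then vanish automatically. Subtracting the $i$-th CC equation of the $n$-body system from the $i$-th CC equation of the $(n+1)$-body system, every mutual interaction with $j\neq 0,i$ cancels, and for each $i=1,\ldots,n$ I obtain
\[
m_0\,\frac{x_0-x_i}{|x_0-x_i|^3}\;=\;Ax_i+B,\qquad A:=\lambda-\bar\lambda,\quad B:=\bar\lambda\bar c-\lambda c,
\]
where $c,\bar c$ now denote the scalar coordinates of the two mass centres. A routine computation using $\bar c=(mc+m_0x_0)/\bar m$ verifies that the CC equation for $\q_0$ in the extended system is a consequence of these, so the display above captures all the new information.

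Regarded as a function of $y$, the left-hand side is
\[
g(y)\;=\;m_0\,\frac{x_0-y}{|x_0-y|^3}\;=\;\begin{cases}+m_0/(x_0-y)^2,& y<x_0,\\ -m_0/(y-x_0)^2,& y>x_0.\end{cases}
\]
On each half-line $g$ is smooth, strictly increasing, and of definite sign---positive to the left of $x_0$, negative to the right---with $g$ strictly convex on $(-\infty,x_0)$ and strictly concave on $(x_0,+\infty)$. The constraint then reads $g(x_i)=f(x_i)$ for $i=1,\ldots,n$, where $f(y):=Ay+B$ is affine.

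The conclusion follows from a short sign/slope dichotomy. If some $x_i<x_0<x_j$, then $f(x_i)=g(x_i)>0>g(x_j)=f(x_j)$ while $x_i<x_j$, forcing $A<0$; conversely, if $A<0$ then $f$ is strictly decreasing, but $g$ is strictly increasing on each half-line, so each side of $x_0$ can contain at most one $x_i$, giving $n\leq 2$. If instead $A\geq 0$, the previous sign/order argument forbids bodies on opposite sides of $x_0$, and the affine $f$ meets the strictly convex (or strictly concave) $g$ in at most two points---again $n\leq 2$. Combined with the hypothesis $n\geq 2$, this yields $n=2$. The only step with real content is this dichotomy: the convexity/concavity bound by itself gives only $n\leq 4$, and it is the interplay between the sign of $g$ and the slope of $f$ on the two sides of $x_0$ that halves the bound.
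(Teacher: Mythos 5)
Your proof is correct, and although it opens exactly as the paper's does --- subtracting the $i$-th equation of the $n$-body system from that of the $(n+1)$-body system to obtain $m_0(x_0-x_i)/|x_0-x_i|^3=Ax_i+B$ for $i=1,\dots,n$, which is precisely the paper's scalar identity $\b y_i+m_0y_i/|y_i|^3+\a=0$ in the shifted variable $y_i=x_i-x_0$ (with $A=\b$ and $\a=-(\b x_0+B)$) --- the solution-counting step is genuinely different. The paper substitutes $z=1/y_i$, turning the constraint into two depressed cubics $\mp m_0z^3+\a z+\b=0$ governing the bodies to the left and to the right of $x_0$, and then caps the total number of admissible roots at $2$ by noting that at least one of the two discriminants $\pm4m_0\a^3-27m_0^2\b^2$ is negative and by applying Vieta's formulas (no $z^2$ term, so the roots sum to zero). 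You instead read everything off the graph of $g(y)=m_0(x_0-y)/|x_0-y|^3$: the sign of $g$ on the two sides of $x_0$ forces $A<0$ whenever bodies straddle $x_0$, in which case the decreasing affine $f$ meets the increasing $g$ at most once per half-line; and when $A\ge 0$ all bodies lie on one half-line, where strict convexity or concavity of $g$ caps the intersections with an affine function at two. Both arguments are complete; yours is more elementary (no discriminant computation) and more portable, since it uses only the sign, monotonicity and convexity of $\pm m_0/|x_0-y|^a$ and hence transfers verbatim to general homogeneous potentials, whereas the cubic-root count is tied to the Newtonian exponent (the paper itself remarks that its methods should extend to general potentials, and your version of this lemma actually does so without modification). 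One cosmetic remark: your observation that the equation for $\q_0$ follows from the other $n$ is true (it is the paper's summation identity) but is not needed here, since only the necessity direction of the constraint is used.
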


  This reduces study of  collinear extensions to  study of  the well-known 
 three-body collinear  central configurations, which has  been considered  by  Euler \cite{Eul}, see Section \ref{subsec:2-3}. 

In what  follows, we mainly discuss the non-collinear case.  In this case,  Fernandes-Mello \cite{FM13-1}  have showed  that the  $n$-body sub  configuration must lie on a common circle or sphere and the added mass is at the geometric center. Their proof employed the Laura-Andoyer equations. Our approach  is  different from theirs, see Section \ref{sec:pf}.  Our results contain more details, which enables us to  provide a complete classification of the non-collinear extensions. 
We divide our discussion into two cases: $\q_0=c$ and $\q_0\ne c$. 

\begin{theorem}\label{thm:c=0}
	Suppose  that $\q_0=c$.  Then both  the  $(n+1)$-body configuration  and the  $n$-body sub  configuration are central  if and only if  the following two conditions are satisfied
	\begin{itemize}
		\item 	The  $n$-body sub  configuration is central; 
		\item  $|\q_1-\q_0|=|\q_2-\q_0|=... =|\q_n-\q_0| $.
	\end{itemize}
\end{theorem}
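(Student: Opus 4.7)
\textbf{Proof plan for Theorem \ref{thm:c=0}.}

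The plan is to exploit the observation that when $\q_0=c$, the center of mass is unchanged by the extension: $\bar c = (mc+m_0\q_0)/(m+m_0)=c$. Writing the central configuration equation for body $i\ge 1$ componentwise as
\[
\sum_{j\ne i,\,j\ge 1}\frac{m_j(\q_i-\q_j)}{r_{ij}^3}=\lambda(\q_i-c),\qquad \sum_{j\ne i,\,j\ge 0}\frac{m_j(\q_i-\q_j)}{r_{ij}^3}=\bar\lambda(\q_i-\bar c),
\]
I will subtract the two and use $\bar c=c$ and $\q_0=c$ to obtain the vector identity
\[
\frac{m_0}{r_{i0}^3}\,(\q_i-\q_0)=(\bar\lambda-\lambda)\,(\q_i-\q_0),\qquad i=1,\dots,n.
\]
Since no $\q_i$ coincides with $\q_0$ (that would be a collision), this forces $r_{i0}^3=m_0/(\bar\lambda-\lambda)$ to be independent of $i$, which is precisely the equal-distance condition. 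This is the heart of the forward direction; the first condition (the sub-configuration is central) is part of the hypothesis, so nothing more is needed there. As a by-product one reads off $\bar\lambda=\lambda+m_0/r^3$ where $r$ is the common distance.

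For the converse, I assume the $n$-body sub-configuration is central with multiplier $\lambda$ and that $|\q_i-\q_0|=r$ for every $i\ge 1$, with $\q_0=c$. Setting $\bar\lambda:=\lambda+m_0/r^3$ and using $\bar c = c = \q_0$, I will verify the extended equations directly: for $i\ge 1$,
\[
\sum_{j\ne i,\,j\ge 0}\frac{m_j(\q_i-\q_j)}{r_{ij}^3}=\lambda(\q_i-c)+\frac{m_0}{r^3}(\q_i-\q_0)=\bar\lambda(\q_i-\bar c),
\]
and for $i=0$,
\[
\sum_{j\ge 1}\frac{m_j(\q_0-\q_j)}{r^3}=\frac{1}{r^3}\Bigl(m\q_0-\sum_{j\ge 1}m_j\q_j\Bigr)=\frac{m}{r^3}(\q_0-c)=0=\bar\lambda(\q_0-\bar c).
\]
Thus $\bar\q$ satisfies the vector equations, and the standard Euler-relation argument (dotting the equations with $\q_i-\bar c$ and summing, using homogeneity of $\bar U$ and $\bar I$) ensures that the multiplier is automatically $\bar U/\bar I$, so $\bar\q$ is a bona fide central configuration.

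There is no real obstacle here; the only point requiring a moment's care is ruling out the degenerate case $\bar\lambda=\lambda$ in the forward direction, which is immediate since it would force a collision $\q_i=\q_0$. The proof will be short and essentially algebraic, with the conceptual step being the identification $\bar c = c$ that makes the subtraction of the two systems produce the clean scalar relation above.
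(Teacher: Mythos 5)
Your proposal is correct and follows essentially the same route as the paper: subtracting the two central-configuration systems and using $\bar c=c=\q_0$ is exactly the paper's reduction to \eqref{equ:form2}, whose right-hand side vanishes here and forces $r_{i0}^3=m_0/(\bar\lambda-\lambda)$ to be independent of $i$. The only cosmetic difference is in the converse, where the paper computes $\bar\lambda=\bar U/\bar I=\lambda+m_0/r_{10}^3$ directly from $\bar I=I=mr_{10}^2$, while you posit the multiplier and recover $\bar\lambda=\bar U/\bar I$ via the Euler homogeneity identity; both verifications are valid.
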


\begin{theorem}\label{thm:cn=0}
	Suppose  that $\q_0\ne c$ and that the $(n+1)$-body  configuration  is non-collinear.  Then both  the  $(n+1)$-body configuration  and the  $n$-body sub  configuration are central  if and only if  the following two conditions are satisfied
	\begin{itemize}
		\item 		The  $n$-body sub  configuration is central; 
		\item  $\frac{1}{|\q_i-\q_0|^3}= \frac{ \lambda}{ m}$, $i=1, ..., n$. 
	\end{itemize}
\end{theorem}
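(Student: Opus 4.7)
The plan is to compare the central-configuration equations for $\bar\q$ and $\q$ restricted to the $n$ common particles $i=1,\dots,n$ and exploit that $\q_0 \ne c$. Writing the $i$-th CC equation of each system and subtracting, the internal forces $\sum_{j \ge 1,\, j \ne i} m_j(\q_i - \q_j)/r_{ij}^3$ cancel, leaving
\[
\frac{m_0(\q_i - \q_0)}{r_{i0}^3} \;=\; \bar\lambda(\q_i - \bar c) \;-\; \lambda(\q_i - c).
\]
Using $\bar c = (mc + m_0\q_0)/\bar m$ together with $\q_i - c = (\q_i - \q_0) + (\q_0 - c)$, this rearranges to the key scalar--vector identity
\[
\Bigl(\frac{m_0}{r_{i0}^3} - (\bar\lambda - \lambda)\Bigr)(\q_i - \q_0) \;=\; \Bigl((\bar\lambda - \lambda) - \frac{\bar\lambda m_0}{\bar m}\Bigr)(\q_0 - c),
\]
whose right-hand side is a fixed vector, independent of $i$, along the nonzero vector $\q_0 - c$.

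The next step is a dichotomy. If the scalar $(\bar\lambda - \lambda) - \bar\lambda m_0/\bar m$ were nonzero, every $\q_i - \q_0$ would be a scalar multiple of $\q_0 - c$; since $c$ itself lies on the line through $\q_0$ with direction $\q_0 - c$, the entire $(n+1)$-body configuration $\bar\q$ would then be collinear, contradicting the hypothesis. Hence this scalar vanishes, which simplifies to $\bar\lambda = \lambda \bar m/m$ and therefore $\bar\lambda - \lambda = \lambda m_0/m$. Substituting back, the left side becomes $\bigl(\tfrac{m_0}{r_{i0}^3} - \tfrac{\lambda m_0}{m}\bigr)(\q_i - \q_0) = 0$ for every $i$, and $\q_i \ne \q_0$ forces $1/r_{i0}^3 = \lambda/m$. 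Since the sub-configuration being central is already part of the hypothesis, this gives the forward direction.

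For the converse, assume the sub-configuration is central with multiplier $\lambda$ and $1/r_{i0}^3 = \lambda/m$ for every $i$. Set $\bar\lambda := \lambda \bar m/m$. Direct substitution of the radius condition reduces the $(n+1)$-body CC equation at each $i \ge 1$ to a linear identity in $\q_i - c$ and $\q_0 - c$, whose coefficients match precisely when $\bar\lambda\, m = \lambda\, \bar m$. The equation at $\q_0$ collapses in the same way: the left side becomes $\tfrac{\lambda}{m}\sum_{j=1}^n m_j(\q_0 - \q_j) = \lambda(\q_0 - c)$, while $\bar\lambda(\q_0 - \bar c) = \bar\lambda\cdot \tfrac{m}{\bar m}(\q_0 - c) = \lambda(\q_0 - c)$.

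The one delicate step I expect is the collinearity dichotomy: one must argue cleanly that if every $\q_i - \q_0$ is parallel to $\q_0 - c$ then $\bar\q$ is collinear, which uses both $\q_0 \ne c$ and the fact that $c$ is an affine combination of the $\q_i$. All other manipulations are routine bookkeeping with $\lambda$, $\bar\lambda$, and the center-of-mass relation.
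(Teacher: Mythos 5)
Your proof is correct and follows essentially the same route as the paper: both reduce the problem to the identity $\bigl(\tfrac{m_0}{r_{i0}^3}-(\bar\lambda-\lambda)\bigr)(\q_i-\q_0)=\bigl((\bar\lambda-\lambda)-\tfrac{\bar\lambda m_0}{\bar m}\bigr)(\q_0-c)$ and use non-collinearity to force the right-hand coefficient to vanish, after which $1/r_{i0}^3=\lambda/m$ and the converse verification are the same bookkeeping. The only cosmetic difference is that the paper first treats a body off the line $\overline{c\q_0}$ (where linear independence of $\q_k-c$ and $\q_0-c$ kills both coefficients) and then propagates to the on-line bodies by subtracting equations, whereas your single dichotomy on the right-hand scalar handles all $i$ uniformly.
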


\begin{corollary}
	 Suppose that the   $(n+1)$-body non-collinear  configuration  and the   $n$-body sub  configuration are central, then they are still central if we replace  $m_0$  by an arbitrary mass.
\end{corollary}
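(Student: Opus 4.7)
The strategy is simply to apply Theorem \ref{thm:c=0} or Theorem \ref{thm:cn=0} according to whether $\q_0=c$ or $\q_0\neq c$, and to observe that in both theorems the characterizing conditions are completely independent of $m_0$.

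First, since $\bar\q$ is assumed non-collinear, exactly one of Theorem \ref{thm:c=0} (when $\q_0=c$) and Theorem \ref{thm:cn=0} (when $\q_0\neq c$) applies. In the former the two conditions are that $\q$ is an $n$-body central configuration and that $|\q_1-\q_0|=\cdots=|\q_n-\q_0|$; both are statements purely about the positions $\q_1,\dots,\q_n$ and the masses $m_1,\dots,m_n$. In the latter the conditions are that $\q$ is central and that $|\q_i-\q_0|^3 = m/\lambda$ for every $i$, where $m=\sum_{i=1}^{n}m_i$ and $\lambda = U/I$ is the multiplier of $\q$. Again $m$ and $\lambda$ are invariants of the sub-configuration data alone. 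In neither case does the value of $m_0$ appear.

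Note also that the mass center $c=\frac{1}{m}\sum_{i=1}^{n}m_i\q_i$ of the sub-configuration does not depend on $m_0$, so the dichotomy $\q_0=c$ versus $\q_0\neq c$ is preserved when $m_0$ is changed, and since positions are unchanged the extended configuration remains non-collinear. The plan is therefore: keeping $\bar\q$ and $m_1,\dots,m_n$ fixed, replace $m_0$ by an arbitrary positive mass $m_0'$; observe that the relevant theorem's conditions continue to hold verbatim; then invoke the ``if'' direction of that theorem to conclude that $\bar\q$ with masses $(m_0',m_1,\dots,m_n)$ is again an $(n+1)$-body central configuration whose $n$-body subset $\q$ is central.

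There is no genuine obstacle; the corollary is an immediate consequence of the fact that Theorems \ref{thm:c=0} and \ref{thm:cn=0} furnish criteria that are entirely intrinsic to the sub-configuration and the geometry of $\bar\q$, with no role played by the added mass $m_0$.
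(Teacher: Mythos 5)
Your proof is correct and is essentially the argument the paper intends: the corollary is stated without proof precisely because the characterizing conditions in Theorems \ref{thm:c=0} and \ref{thm:cn=0} involve only the positions and the masses $m_1,\dots,m_n$, never $m_0$. Your additional observations (that the dichotomy $\q_0=c$ versus $\q_0\ne c$ and the non-collinearity are preserved under changing $m_0$) are exactly the right details to make this immediate deduction airtight.
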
	


We    answer the second question of Hampton \cite{Ham05}, see Section \ref{Sec:int}.  

\begin{proposition}\label{prop:Hamp2}
	In $R^3$, there are only three types of  $(n+1)$-body central configurations all of whose $n$-body subsets form a central configuration, namely, the   three-body Eulerian collinear central configurations,  the  Lagrangian equilateral triangle central configurations 
	and the regular tetrahedron  central configurations. 
\end{proposition}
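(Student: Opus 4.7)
The plan is to handle small cases by hand and then invoke Theorems~\ref{thm:collinear}--\ref{thm:cn=0} for the rest. First, I would observe that any two-body configuration is trivially central (with multiplier $\bar m/r_{12}^3$), so for $n+1=3$ the hypothesis on $n$-body subsets is vacuous and the proposition reduces to the classical list of three-body central configurations in $\R^3$, which consists precisely of Euler's collinear family and Lagrange's equilateral family.

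Next, I would assume $n+1\ge 4$ and show that the only possibility is the regular tetrahedron. The collinear case is ruled out immediately by Theorem~\ref{thm:collinear}: a collinear $(n+1)$-body central configuration contains a collinear $n$-body subset, and that theorem permits the extension to be central only when $n=2$, contradicting $n+1\ge 4$. So $\bar\q$ must be non-collinear. For each index $i\in\{0,1,\dots,n\}$ I would then regard $\q_i$ as the body added to the central sub configuration formed by the other $n$ bodies and apply either Theorem~\ref{thm:c=0} (if $\q_i$ coincides with the mass center of that sub configuration) or Theorem~\ref{thm:cn=0} (otherwise). In both cases the conclusion says that $\q_i$ is equidistant from the other $n$ bodies; denote this common distance by $d_i$.

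The rest is a short combinatorial observation: for any $i\ne j$, the edge $\{i,j\}$ has length $d_i$ as read from vertex $i$ and $d_j$ as read from vertex $j$, hence $d_i=d_j$. Consequently every pairwise distance in $\bar\q$ is the same, so $\bar\q$ is a regular simplex on $n+1$ vertices. Such a simplex embeds in $\R^3$ only when $n+1\le 4$, which together with $n+1\ge 4$ forces $n+1=4$ and $\bar\q$ to be a regular tetrahedron; this is well known to be central for any choice of the four masses. Collecting the cases produces the three families listed in the proposition.

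I expect the main conceptual point to be the complementary viewpoint of treating each of the $n+1$ bodies, in turn, as the one just added; once this reformulation is in place Theorems~\ref{thm:c=0} and~\ref{thm:cn=0} convert the subset hypothesis into total equidistance and the remaining dimension count in $\R^3$ is immediate, so no genuine technical obstacle remains.
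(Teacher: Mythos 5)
Your proposal is correct and follows essentially the same route as the paper: Theorem~\ref{thm:collinear} disposes of the collinear case, and applying Theorems~\ref{thm:c=0} and~\ref{thm:cn=0} with each body in turn viewed as the added one forces all mutual distances to be equal, leaving only the regular simplices that fit in $R^3$. Your version merely makes explicit the "each vertex as the added body" step and the vacuousness of the hypothesis when $n=2$, both of which the paper leaves implicit.
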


The sub configuration we are looking for lies on a common circle or sphere.  These central configurations are called \emph{co-circular central configurations}, in the planar case and \emph{co-spherical  central configurations}, in the spatial case.   To make it precise, we use the terminology ``co-spherical configuration'' to indicate that the configuration  is  not planar.  Denote by $r$ the radius of the related circle (sphere). 
 Denote by $r_0$ the cubic root of the ratio of total mass and the multiplier of the sub  central configuration, i.e., 
$$ r_0= (\frac { m}{ \lambda})^{\frac{1}{3}}=  (\frac { m I}{ U})^{\frac{1}{3}} =  (\frac{ \sum_{1 \le i<  j \le n} m_im_j r_{ij}^2 } { \sum_{1 \le i<  j \le n}   m_im_j /r_{ij}  })^{\frac{1}{3}}.     $$ 
An $(n+1)$-body  spatial central configuration of which $n$ points lie in an affine plane is called  a \emph{pyramidal  central configuration}.

\begin{theorem} \label{thm:class}
	In  $R^3$, there are only five ways that an  $n$-body  central  configuration can extend to an $(n+1)$-body  non-collinear central configuration. 
\begin{itemize}
	\item I co-circular to planar:  $n$-body co-circular central  configurations whose mass center coincides with the geometric center,  extend to $(n+1)$-body planar  central configurations by adding $m_0$ at the geometric center;
		\item  II co-circular to planar:  $n$-body co-circular central  configurations whose mass center does not  coincide with the geometric center, but $r=r_0$,  extend to  $(n+1)$-body planar central configurations by adding $m_0$ at the geometric center;
\item   III co-circular to pyramidal: $n$-body co-circular central  configurations whose mass center may or may not coincide with the geometric center, but $r<r_0$, extend to pyramidal central configurations by adding $m_0$ on the orthogonal axis passing through the center of the circle such that $r_{10}=r_0$; 
		
\item    IV co-spherical to spatial: $n$-body co-spherical central  configurations whose mass center coincides with the geometric center,  extend to $(n+1)$-body central configurations by adding $m_0$ at the geometric center;

\item   V co-spherical to spatial: $n$-body co-spherical central  configurations whose mass center does not coincide with the geometric center,  but $r=r_0$, extend to  $(n+1)$-body central configurations by adding $m_0$ at the geometric center. 
\end{itemize}
\end{theorem}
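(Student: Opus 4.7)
The proof plan is to deduce Theorem \ref{thm:class} directly from the if-and-only-if characterizations provided by Theorems \ref{thm:c=0} and \ref{thm:cn=0}, translating their algebraic distance conditions into geometric statements about spheres, planes, and their intersections. I will split into two main branches, $\q_0 = c$ versus $\q_0 \ne c$, and within each branch distinguish whether the affine hull of the sub configuration $\q$ is a plane (so $\q$ is co-circular) or all of $R^3$ (so $\q$ is co-spherical).

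In the branch $\q_0 = c$, Theorem \ref{thm:c=0} forces $|\q_1-\q_0|=\cdots=|\q_n-\q_0|$, so $\q_1,\ldots,\q_n$ lie on a common sphere centered at $\q_0=c$. Since $c$ lies in the affine hull of $\q$, if the sub configuration is planar then the sphere intersects this plane in a circle centered at $c$, yielding a co-circular configuration whose mass center equals its geometric center; this is Case I. If instead the sub configuration is spatial, it is co-spherical with center $c$, giving Case IV.

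In the branch $\q_0 \ne c$ with the $(n+1)$-body configuration non-collinear, Theorem \ref{thm:cn=0} forces $|\q_i-\q_0|^3 = m/\lambda$, so $|\q_i-\q_0|=r_0$ for every $i$, and the $\q_i$ lie on a sphere $S$ of radius $r_0$ centered at $\q_0$. If the sub configuration is spatial, its circumscribed sphere is unique and must coincide with $S$, so $\q_0$ is the geometric center and $r=r_0$, giving Case V. If the sub configuration is planar, lying in a plane $P$, then $\q_1,\ldots,\q_n$ lie on the circle $S \cap P$; I then split on whether $\q_0 \in P$ or not. When $\q_0 \in P$, the entire extension is planar, the circle has radius $r_0$ centered at $\q_0$, and the assumption $\q_0\ne c$ forces the mass center to differ from the geometric center: this is Case II. When $\q_0 \notin P$, setting $h>0$ to be the distance from $\q_0$ to $P$, the Pythagorean theorem gives $r^2+h^2=r_0^2$, so $r<r_0$ and $\q_0$ sits on the perpendicular axis through the center of the circle at the unique height $h=\sqrt{r_0^2-r^2}$; this is Case III.

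The bulk of the work is bookkeeping rather than a single hard obstacle: one must make the algebraic identification $|\q_i-\q_0|=r_0=(m/\lambda)^{1/3}$ from Theorem \ref{thm:cn=0}, and then check that every geometric sub case corresponds to exactly one of the five listed types, taking care in Case III to note that the mass center of $\q$ may or may not coincide with the geometric center of the circle (both options being compatible with the axis condition on $\q_0$). Sufficiency in each case is automatic from the iff statements of Theorems \ref{thm:c=0} and \ref{thm:cn=0}, since the prescribed placement of $\q_0$ is designed precisely to realize the required distance condition, so no additional computation is needed to confirm that each listed configuration is a genuine extension.
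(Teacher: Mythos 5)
Your proposal is correct and follows essentially the same route as the paper, whose entire proof of this theorem is the single sentence ``It is clear from Theorem \ref{thm:c=0} and Theorem \ref{thm:cn=0}.'' Your case analysis ($\q_0=c$ versus $\q_0\ne c$, planar versus spatial affine hull, and $\q_0\in P$ versus $\q_0\notin P$) is exactly the bookkeeping the authors leave implicit, and your identification $|\q_i-\q_0|=(m/\lambda)^{1/3}=r_0$ together with the Pythagorean computation for Case III matches how the paper uses these quantities elsewhere (e.g.\ in Proposition \ref{prop:pcc}).
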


Chenciner \cite{Che01}  asked: Is the regular $n$-gon with equal masses the unique co-circular central configuration that the center of mass equals the geometric center?   This question is listed as Problem 12 in a collection of  open problems in celestial mechanics compiled by Albouy-Cabral-Santos \cite{ACS12}.  We may ask one equivalent question:  
If  an  $n$-body co-circular  central configuration can extend to a co-planar  central configuration by adding 
 one  mass $m_0$ at the mass  center,  
 does the 
$n$-body central configuration have to be the regular $n$-gon with equal masses? 
Until now, the question has only been answered affirmatively  for $n=4$, by Hampton in 2003 \cite{Ham03}. 

\begin{corollary} \label{cor:r_0}
	Suppose  
	that an   $(n+1)$-body non-collinear central  configuration  is obtained from an   $n$-body co-circular  (co-spherical) central  configuration by adding  one mass $m_0$ such that $r_{i0}=r_0, i=1, ..., n$, i.e., by  way II, III, and V of Theorem \ref{thm:class}. 
	Let $\bar r_0=  (\bar m / \bar \lambda)^{\frac{1}{3}}$,  the cubic root of the ratio of total mass and the multiplier of the extended  $(n+1)$-body central configuration.  Then $\bar r_0$ does not depend on the value of $m_0$, and $ \bar r_0=r_0$. 
\end{corollary}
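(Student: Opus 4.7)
The plan is a direct algebraic verification, exploiting the special form that $\bar U$ and $\bar I$ take under the hypothesis $r_{10}=\cdots=r_{n0}=r_0$. The goal is to show $\bar m \bar I/\bar U = mI/U$, since then $\bar r_0^{\,3}=\bar m/\bar\lambda = \bar m \bar I/\bar U = mI/U = r_0^{\,3}$, which simultaneously proves the equality $\bar r_0=r_0$ and the independence from $m_0$ (the $m_0$'s will cancel in the comparison).

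First I would expand $\bar U$ using the decomposition given in the setup,
\[
\bar U \;=\; U + \sum_{i=1}^{n}\frac{m_0 m_i}{r_{i0}} \;=\; U + \frac{m_0 m}{r_0},
\]
where the hypothesis $r_{i0}=r_0$ lets me pull $1/r_0$ out of the sum and collapse $\sum_i m_i = m$. Next I would compute $\bar I$ from its pair-sum expression. Writing
\[
\bar I \;=\; \frac{1}{\bar m}\Bigl[\sum_{1\le i<j\le n} m_i m_j r_{ij}^2 + \sum_{i=1}^n m_0 m_i r_{i0}^2\Bigr],
\]
I recognize the first bracketed sum as $mI$ (from the definition of $I$) and, using $r_{i0}=r_0$, the second as $m_0 m r_0^2$. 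Hence $\bar m \bar I = mI + m m_0 r_0^2$.

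With these two formulas in hand, I would form the ratio
\[
\frac{\bar m \bar I}{\bar U} \;=\; \frac{mI + m m_0 r_0^2}{U + m_0 m / r_0}
\]
and check that it equals $mI/U$. Cross-multiplying reduces the desired equality to $U m m_0 r_0^2 = m I m_0 m / r_0$, i.e.\ to $U r_0^{\,3} = m I$, which is nothing other than the definition $r_0^{\,3} = mI/U$ of $r_0$. This finishes the identity $\bar r_0 = r_0$, and since neither side of the derived identity depends on $m_0$ after cancellation, the independence is automatic.

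There is really no serious obstacle here: the computation is routine once one notices that the hypothesis $r_{i0}=r_0$ reduces both $\bar U-U$ and $\bar m\bar I - mI$ to a single closed-form expression in $m_0$, $m$, and $r_0$. The only thing I would be slightly careful about is to quote the correct form of $I$ (the mass-weighted pair sum divided by $m$, as given in the paper's setup) so that the identification $\sum_{i<j} m_i m_j r_{ij}^2 = mI$ is used with the right normalization; once that is in place, the cancellation giving $r_0^{\,3}=mI/U$ is exactly the definition and the proof closes.
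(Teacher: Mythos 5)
Your proof is correct and is essentially the paper's own argument: the paper's proof of this corollary simply points to the sufficiency computation in the proof of Theorem \ref{thm:cn=0}, which establishes $\bar\lambda/\bar m = \lambda/m$ by exactly the same collapse of $\bar U - U$ and $\bar m\bar I - mI$ under $r_{i0}=r_0$ that you carry out. Your identity $\bar m\bar I/\bar U = mI/U$ is just the reciprocal form of that equality, so there is nothing new to flag.
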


\begin{remark} \label{rem:fm}
	Fernandes-Mello \cite{FM15} announced that if  an  $n$-body  co-circular central configuration can extend to an $(n+1)$-body central configuration by adding one arbitrary mass at the geometric center,  the mass center of the $n$ bodies  must coincide with the geometric center (Lemma 2.3 of \cite{FM15}). That is to say, the extension way II of Theorem \ref{thm:class}, does not exist. This statement is incorrect.   The well-known Lagrangian equilateral triangle central configuration is a counterexample, see Remark \ref{rem:r02} and Section \ref{subsec:2-3}. The  error in their proof is similar to the one described  in Remark \ref{rem:fm2}.  
\end{remark}


   According to Theorem \ref{thm:class},  the measurement of $r$ and $r_0$ of  the co-circular and co-spherical central  configurations is crucial for the classification of stacked central configurations. We obtain a general result for the   co-circular case, with which, we could prove that $r_0> r$ holds for all four, five, and six-body co-circular central configurations.

   Some \textbf{ notations} for the co-circular configurations:  Edges are  line segments connecting two different vertices of a polygon. For a co-circular configuration whose vertices are ordered counterclockwise as   $(\q_1, ..., \q_n)$, the edges $\overline{\q_i\q_j}$ are called \emph{exterior sides} if $|i-j|=1$ or $n-1$, and \emph{diagonals} otherwise.   An edge and a vertex on that edge are called \emph{incident}. 

\begin{theorem}\label{thm:ccc}
Assume that $n\ge 4$.  	For any  $n$-body co-circular central configuration,   all the exterior sides are less than $r_0$.  At  each vertex,   there is at least one incident diagonal  larger than $r_0$. 
\end{theorem}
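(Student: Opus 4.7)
The strategy is to derive a local identity at each vertex of a co-circular central configuration that relates the chord lengths $r_{ij}$ to $r_0$, and then combine this identity with the cyclic ordering of the vertices on the circle.

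Let $O$ denote the center of the circle and set $\omega_i = \q_i - O$, so $|\omega_i| = r$ and $\omega_i \cdot \omega_j = r^2 - r_{ij}^2/2$. Dotting the central configuration equation $\sum_{j \ne i} m_j(\q_j - \q_i)/r_{ij}^3 = \lambda(c - \q_i)$ with $\omega_i$ yields the radial identity
\[
\sum_{j \ne i} \frac{m_j}{r_{ij}} = 2\lambda\bigl(r^2 - f_i\bigr), \qquad f_i := (c - O) \cdot \omega_i.
\]
Separately, the parallel-axis identity combined with $I = m(r^2 - |c - O|^2)$ gives $\sum_{j \ne i} m_j r_{ij}^2 = 2m(r^2 - f_i)$. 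Eliminating $f_i$ and using $r_0^3 = m/\lambda$ produces the key vertex identity
\[
\sum_{j \ne i} m_j \cdot \frac{r_{ij}^3 - r_0^3}{r_{ij}} = 0, \qquad i = 1, \dots, n.
\]

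For $n \ge 4$ the $n - 1 \ge 3$ other vertices cannot all be equidistant from $\q_i$ (two distinct circles meet in at most two points), so this identity forces both signs at every vertex: some $r_{ij} > r_0$ and some $r_{ij} < r_0$. Hence each vertex already has at least one incident edge strictly longer than $r_0$; once we also know that no exterior side attains this length, the incident-diagonal claim follows immediately.

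The remaining and main task is to show that every exterior side $r_{k, k+1}$ is strictly less than $r_0$. I would argue by contradiction: suppose $r_{k, k+1} \ge r_0$ and exploit the cyclic order. Since the vertices $\q_j$ with $j \ne k, k+1$ all lie on the major arc between $\q_k$ and $\q_{k+1}$, the triangles $(\q_k, \q_j, \q_{k+1})$ share a common orientation. Applying the Laura-Andoyer relation $\sum_{j \ne k} m_j (\q_j - \q_k)(1/r_{kj}^3 - 1/r_0^3) = 0$ and taking the planar wedge with $\q_{k+1} - \q_k$, the $(k+1)$-term vanishes and the remaining terms all involve signed areas of the same sign; combining with the analogous identity at $\q_{k+1}$ and with the vertex identities at both endpoints, the assumed sign of $r_{k,k+1}^3 - r_0^3$ is forced to clash with the balance required. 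The delicate point, which I expect to be the main obstacle, is the sign bookkeeping that shows the diagonal contributions cannot accommodate a nonpositive exterior-side term.
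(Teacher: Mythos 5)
Your vertex identity $\sum_{j\ne i} m_j (r_{ij}^3 - r_0^3)/r_{ij} = 0$ is correct (both the radial dot product and the parallel-axis computation check out), and the observation that at most two of the $n-1\ge 3$ remaining vertices can be equidistant from $\q_i$ does force chords of both kinds, longer and shorter than $r_0$, at every vertex. This is a genuinely different, purely scalar route to that dichotomy; the paper reads it off the vector equation $\sum_{k\ne j} m_k S_{kj}(\q_k-\q_j)=0$, with $S_{kj}=r_{kj}^{-3}-r_0^{-3}$, by projecting onto the inward radial direction at $\q_j$. Either way, the second assertion of the theorem reduces to the first.

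The first assertion, that every exterior side is shorter than $r_0$, is where your argument has a genuine gap, and the sketch you give would not close it. Wedging the Laura--Andoyer relation at $\q_k$ with $\q_{k+1}-\q_k$ annihilates precisely the term carrying $r_{k,k+1}$, so the resulting identity $\sum_{j\ne k,k+1} m_j S_{jk}A_j = 0$ (with all signed areas $A_j$ of one sign) constrains only the chords other than the exterior side: it says those $S_{jk}$ cannot all have one strict sign, which is perfectly consistent with $r_{k,k+1}\ge r_0$. The missing idea, which is the heart of the paper's proof, is the unimodality of the chord lengths: as $j$ runs cyclically from $2$ to $n$, the sequence $r_{1j}$ first increases then decreases (or is monotone), so the vertices with $r_{1j}\ge r_0$ and those with $r_{1j}< r_0$ occupy two complementary arcs separated by a line through $\q_1$. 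Projecting $\sum_{k\ne 1} m_k S_{k1}(\q_k-\q_1)=0$ onto the normal of that separating line (which degenerates to the inward radial direction when both exterior sides at $\q_1$ are $\ge r_0$) makes every term contribute with the same sign, giving the contradiction. Without some such separation of the long chords from the short ones, the ``sign bookkeeping'' you defer cannot be completed.
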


\begin{remark} 	For $n=2, 3$, there is no diagonal. It is easy to see that $r_0=r_{12}$ for both of the two cases. 
			  For $n\ge4$, there are at least $n/2$ ($n$ even) or $(n+1)/2$ ($n$ odd) diagonals greater than $r_0$.  For $n=4,5$, these results have been proved for a larger set of central configurations, namely, the  four and five-body  planar convex central configurations, by 
  MacMillan-Bartky \cite{MB32}  and Chen-Hsiao \cite{CH18}  respectively.  Generally, for large $n$,   there would be many  diagonals smaller  than  $r_0$, see the examples in Section \ref{subsec:gon}.
\end{remark}

\begin{corollary} \label{cor:semicir}
	Co-circular central configurations can't lie entirely in a semi-circle. 
\end{corollary}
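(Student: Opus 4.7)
The plan is to deduce the corollary directly from Theorem \ref{thm:ccc} by a contradiction argument, with the small case $n=3$ handled separately.

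For $n=3$ the only co-circular central configuration is the Lagrangian equilateral triangle, whose three vertices are separated by central angles of $2\pi/3$; since the arc containing all three has angular measure $4\pi/3 > \pi$, they cannot lie in a semi-circle, so this case is immediate. For $n\ge 4$ I would assume for contradiction that the $n$ vertices all lie on a closed semi-circle of the circumscribed circle of radius $r$, and relabel them counterclockwise as $\q_1,\ldots,\q_n$ so that $\q_1$ and $\q_n$ bracket the empty semi-circular arc; that is, the arc from $\q_1$ to $\q_n$ passing through $\q_2,\ldots,\q_{n-1}$ has angular measure at most $\pi$. With this labelling, the exterior side $\overline{\q_n\q_1}$ is the chord spanning the empty semi-circle, and every diagonal $\overline{\q_1\q_i}$ with $3\le i\le n-1$ subtends a central angle strictly smaller than that of $\overline{\q_1\q_n}$.

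Next I would invoke the elementary fact that chord length $2r\sin(\theta/2)$ is strictly increasing in $\theta$ on $[0,\pi]$, which gives $|\q_1-\q_i|<|\q_1-\q_n|$ for every $i\in\{3,\ldots,n-1\}$. By the first conclusion of Theorem \ref{thm:ccc}, the exterior side satisfies $|\q_1-\q_n|<r_0$, hence every diagonal incident to the vertex $\q_1$ has length strictly less than $r_0$. But the second conclusion of Theorem \ref{thm:ccc} asserts that at $\q_1$ there must exist some incident diagonal of length greater than $r_0$, which is the desired contradiction.

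The proof is essentially bookkeeping once Theorem \ref{thm:ccc} is in hand, so there is no serious obstacle. The one point requiring care is the labelling step: the hypothesis that all vertices lie in a semi-circle must be used precisely to ensure that $\q_n$ is the angularly most distant vertex from $\q_1$ among $\q_2,\ldots,\q_n$, so that the \emph{exterior} side $\overline{\q_1\q_n}$ dominates every \emph{diagonal} emanating from $\q_1$. Without this configuration-in-a-semi-circle hypothesis such a monotonicity comparison would fail, which is precisely why the corollary is a genuine consequence of the theorem rather than a tautology.
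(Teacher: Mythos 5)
Your proof is correct and takes essentially the same route as the paper's: the paper's one-line argument is exactly that a configuration confined to a semi-circle has an exterior side longer than all the diagonals, which contradicts Theorem \ref{thm:ccc}. Your version merely spells out the chord-length monotonicity and the labelling of $\q_1,\q_n$, and adds a (harmless) separate treatment of $n=3$.
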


\begin{remark}
	As suggested by Cors-Roberts \cite{CR12}, this fact also follows nicely from the  \emph{perpendicular bisector theorem} \cite{Moe90}. 
\end{remark}

\begin{proposition}\label{cor:4-6ccc}
	For all four, five, and six-body co-circular central configurations,    the radius of the circle containing the bodies  is smaller than $r_0$. 
\end{proposition}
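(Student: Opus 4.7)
The plan is to combine the upper bound supplied by Theorem \ref{thm:ccc} (every exterior side is strictly less than $r_0$) with an elementary pigeonhole lower bound on the longest exterior side, and to observe that the two bounds are jointly consistent only when $r<r_0$ for $n\in\{4,5,6\}$.

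First I would order the $n$ bodies $\q_1,\dots,\q_n$ counterclockwise on the circumscribed circle of radius $r$ and let $\theta_1,\dots,\theta_n$ denote the consecutive central angles, so that $\sum_i\theta_i=2\pi$ and $r_{i,i+1}=2r\sin(\theta_i/2)$. Corollary \ref{cor:semicir} would then be invoked to conclude that no $\theta_i$ reaches $\pi$ (otherwise the remaining bodies would sit in the closed complementary semi-circle), so on the range $(0,\pi)$ the map $\theta\mapsto 2r\sin(\theta/2)$ is strictly increasing. Pigeonhole gives $\max_i\theta_i\geq 2\pi/n$, hence the longest exterior side has length at least $2r\sin(\pi/n)$.

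Next I would restrict to $n\in\{4,5,6\}$ and note that $2\sin(\pi/n)\geq 2\sin(\pi/6)=1$, with strict inequality for $n=4,5$. Combining this lower bound with the strict upper bound from Theorem \ref{thm:ccc} yields
\[
r \;\leq\; 2r\sin(\pi/n) \;\leq\; \max_i r_{i,i+1} \;<\; r_0,
\]
which is the desired strict inequality $r<r_0$. The only mildly delicate point is $n=6$, where the geometric lower bound $2r\sin(\pi/n)=r$ is saturated (for instance by the regular hexagon), so strictness of the final inequality has to come entirely from the strict upper bound in Theorem \ref{thm:ccc} rather than from the pigeonhole step; this is a bookkeeping observation rather than a genuine obstacle. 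The argument breaks down for $n\geq 7$ because $2\sin(\pi/n)<1$, which is consistent with the proposition being stated only in the range $n\leq 6$.
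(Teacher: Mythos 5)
Your proof is correct and is essentially the paper's argument: both rest on Theorem \ref{thm:ccc}'s strict bound on exterior sides, the chord--central-angle relation, and the fact that the consecutive central angles sum to $2\pi$ (with Corollary \ref{cor:semicir} guaranteeing each angle stays below $\pi$). The only cosmetic difference is that the paper argues by contradiction (assuming $r\ge r_0$ forces every central angle below $2\pi/n$, so they cannot sum to $2\pi$), whereas you run the same computation directly via pigeonhole.
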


\begin{remark}\label{rem:r02}
	The two ends of a  segment can be placed on a circle with radius equal or greater than half of the segment. Thus, for $n=2$, we have $ r_0/2\le r<\infty$. For $n=3$,  we have $r_0=\sqrt{3}r$.
\end{remark}
	
	We  find all  the extensions of four-body central configurations to five-body central configurations.  This  answers  Hampton's first question  \cite{Ham05}, see Section \ref{Sec:int}.  
\begin{theorem}\label{thm:4-5}
	There are only three types of five-body  central configurations of which  a   four-body sub  configuration is central:  
	\begin{itemize}
		\item the square with equal masses and an arbitrary mass $m_0$ at the center;
		\item any  four-body co-circular  central configuration and an arbitrary mass $m_0$ on the orthogonal axis passing through the center of the circle.  The height of $m_0$ is $h= \sqrt{   r_0^2 -r^2}$;
		\item the regular tetrahedron  with equal masses and an arbitrary mass $m_0$ at the center.	
	\end{itemize}
\end{theorem}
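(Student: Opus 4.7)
The plan is to apply Theorem \ref{thm:collinear} and Theorem \ref{thm:class} with $n=4$, then invoke the measurement results in Proposition \ref{cor:4-6ccc} together with two classical rigidity facts (Hampton's 2003 result on co-circular four-body central configurations whose mass center is the geometric center, and the classical theorem that every non-planar four-body central configuration is a regular tetrahedron) to eliminate the impossible cases and identify the three families.

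First I would rule out collinear configurations at both levels. If the five-body central configuration were collinear, so would be the four-body sub configuration, contradicting Theorem \ref{thm:collinear} (for $n=4$). If instead the four-body sub configuration were collinear but the five-body extension non-collinear, then Theorem \ref{thm:cn=0} would force $r_{10}=\cdots=r_{40}=(m/\lambda)^{1/3}$; since four collinear points cannot be equidistant from any point outside their line, this is impossible. Hence both $\bar \q$ and $\q$ are non-collinear, so Theorem \ref{thm:class} applies and the four-body sub configuration is either co-circular (planar) or co-spherical (spatial).

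Next I handle the co-circular case. By Proposition \ref{cor:4-6ccc} one has $r<r_0$ for every four-body co-circular central configuration, which immediately rules out extension type II. For type I (mass center equal to geometric center), Hampton's 2003 theorem identifies the sub configuration as the square with four equal masses, and adding $m_0$ at its center gives the first family of the theorem. For type III the condition $r<r_0$ is always satisfied, so every four-body co-circular central configuration extends by placing $m_0$ on the perpendicular axis through the center of the circle; Pythagoras applied to $r_{i0}=r_0$ gives $h=\sqrt{r_0^{\,2}-r^{2}}$, producing the second family.

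For the co-spherical case I would use the classical rigidity theorem that any non-planar four-body central configuration must be a regular tetrahedron (with arbitrary positive masses). A short computation of $I$ and $U$ for the regular tetrahedron of side $s$ shows $r_0=s$ while $r=s\sqrt{3/8}$, so $r<r_0$ strictly, eliminating type V. Type IV requires the mass center to coincide with the geometric center of the tetrahedron, which for a regular tetrahedron forces all four masses equal; placing $m_0$ at the center yields the third family. The main obstacle in this outline is really the two external ingredients I rely on (Hampton's square theorem and the regular-tetrahedron rigidity of four-body spatial central configurations); given those, everything else reduces to the bookkeeping above and the elementary inequality $r<r_0$ from Proposition \ref{cor:4-6ccc}.
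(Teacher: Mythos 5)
Your proposal is correct and follows essentially the same route as the paper: the proof in Section 4.3 likewise runs through the five extension types of Theorem \ref{thm:class} for $n=4$, invokes Hampton's 2003 result for type I, uses Proposition \ref{cor:4-6ccc} (i.e.\ $r<r_0$) to kill type II and admit all of type III, and disposes of type V via the regular-tetrahedron computation $r_0=r_{12}=\tfrac{2\sqrt{6}}{3}r>r$, which is exactly your $r_0=s$, $r=s\sqrt{3/8}$. The only cosmetic difference is that you exclude the mixed collinear/non-collinear case by the equidistance condition of Theorem \ref{thm:cn=0} (you should also cite Theorem \ref{thm:c=0} to cover $\q_0=c$), whereas the paper appeals to the perpendicular bisector theorem; both work.
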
 

For the extension of five and more bodies, we have  some partial results. 
\begin{proposition}\label{prop:5+}
	
	\begin{itemize}
		\item If a  five (six)-body  co-circular  central configuration can extend to a co-planar  central configuration by adding 
		one  mass $m_0$ at the geometric center,  the mass center of the co-circular  central configuration must coincide with  the  geometric center. 
		

		\item Any  five and six-body  co-circular  central configurations can  extend to a pyramidal central configuration by adding 
		an arbitrary mass $m_0$ on the orthogonal axis passing through the center of the circle.  The height of $m_0$ is $h= \sqrt{   r_0^2 -r^2}$. 
	\end{itemize}
\end{proposition}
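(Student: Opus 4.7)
The plan is to derive both bullets as direct consequences of Theorem \ref{thm:class} combined with Proposition \ref{cor:4-6ccc}. Theorem \ref{thm:class} enumerates every way a co-circular central configuration can be extended to a non-collinear one, and Proposition \ref{cor:4-6ccc} supplies the crucial strict inequality $r < r_0$ for $n = 4, 5, 6$. Once both are on the table, the two statements reduce to checking which of the five cases in the classification is compatible with the hypotheses.

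For the first bullet, I would proceed as follows. Suppose an $n$-body co-circular central configuration with $n \in \{5, 6\}$ is extended to a planar central configuration by placing $m_0$ at the geometric center. Consulting Theorem \ref{thm:class}, the only mechanisms that produce a planar extension from a co-circular sub configuration are way I and way II, both of which put $m_0$ at the geometric center. Way II, however, requires $r = r_0$, which is forbidden by Proposition \ref{cor:4-6ccc}. Hence way I must apply, and in that case the mass center of the co-circular configuration coincides with its geometric center, as claimed.

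For the second bullet I would invoke way III of the same theorem. The inequality $r < r_0$ from Proposition \ref{cor:4-6ccc} is precisely the hypothesis needed to activate way III, so $m_0$ may be placed on the orthogonal axis through the circle center. Theorem \ref{thm:cn=0} forces $r_{i0} = r_0$ for every $i$, and applying Pythagoras to the right triangle with legs $r$ and $h$ yields the unique positive height $h = \sqrt{r_0^2 - r^2}$. That $m_0$ can be taken arbitrary is built into Theorem \ref{thm:cn=0} (and restated in Corollary \ref{cor:r_0}): the constraint $|\q_i - \q_0| = r_0$ involves only data of the sub configuration.

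Honestly, there is no substantive obstacle here; the heavy lifting has already been done in the classification theorem and in the measurement $r < r_0$ for small $n$. The only point requiring care is that the relevant inequality from Proposition \ref{cor:4-6ccc} is \emph{strict}: strictness is exactly what simultaneously excludes way II in the first bullet and guarantees a real, positive height in the second.
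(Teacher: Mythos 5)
Your proposal is correct and follows essentially the same route as the paper: the authors also derive both bullets by running through the five cases of Theorem \ref{thm:class} and using Proposition \ref{cor:4-6ccc} to rule out way II (since $r<r_0$ excludes $r=r_0$) and to activate way III, with the height $h=\sqrt{r_0^2-r^2}$ coming from $r_{i0}=r_0$ as in Theorem \ref{thm:cn=0}. Nothing further is needed.
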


Our classification of the stacked central configurations also enables us to  give  a complete characterization of the pyramidal central configurations. 

\begin{proposition}\label{prop:pcc}
	Let  $\bar \q=(\q_0, \q_1, ..., \q_n)$ be  an $(n+1)$-body  pyramidal  configuration with masses $ m_0, m_1, ..., m_n$, where $\q_0$ is the top vertex which is off the affine plane containing  $m_1, ..., m_n$.  Then the pyramidal configuration is central if and only if the following conditions are satisfied
	\begin{itemize}
		\item The sub configuration  $\q_1, ..., \q_n$  is  central and co-circular; 
		\item $r<r_0$, where $r, r_0$ are values associated with the $n$-body co-circular central configuration;
		\item The top vertex $\q_0$  is on the orthogonal axis passing through the center of the circle.  The height of $\q_0$ is $h= \sqrt{   r_0^2 -r^2}$. 
	\end{itemize}
	The top mass $m_0$ is arbitrary.  
\end{proposition}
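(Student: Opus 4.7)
The plan is to obtain both directions by decomposing the $(n+1)$-body central configuration equation at each base vertex $\q_i$ into components normal and tangential to the affine plane $\Pi$ containing $\q_1,\ldots,\q_n$. Writing the equation at $\q_i$, $1\le i\le n$, as
\begin{equation*}
\bar\lambda\,(\q_i - \bar c) \;=\; \frac{m_0(\q_i - \q_0)}{r_{i0}^3} \;+\; \sum_{\substack{1\le j\le n\\ j\ne i}}\frac{m_j(\q_i - \q_j)}{r_{ij}^3},
\end{equation*}
I observe that the last sum is tangent to $\Pi$, so only the $\q_0$-term on the right and the displacement of $\bar c$ off $\Pi$ on the left carry normal components.

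For the normal part, let $h>0$ be the height of $\q_0$ above $\Pi$. The normal component of $\q_i-\q_0$ is $-h$, while that of $\q_i-\bar c$ is $-m_0 h/\bar m$, so equating the two sides gives $r_{i0}^3 = \bar m/\bar\lambda = \bar r_0^3$, independently of $i$. Hence all base vertices sit at a common distance $\bar r_0$ from $\q_0$, which forces $\q_1,\ldots,\q_n$ onto a circle whose center $O$ is the foot of the perpendicular from $\q_0$ to $\Pi$; writing $r$ for the radius gives $h=\sqrt{\bar r_0^2-r^2}$ and, in particular, $r<\bar r_0$.

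For the tangential part, the in-plane projection of $\bar c$ is $(mc+m_0O)/\bar m$, so the tangential component of $\q_i-\bar c$ equals $\bigl(m(\q_i-c)+m_0(\q_i-O)\bigr)/\bar m$, while the $\q_0$-contribution on the right equals $m_0(\q_i-O)/\bar r_0^3 = m_0\bar\lambda(\q_i-O)/\bar m$. The $(\q_i-O)$-terms cancel, leaving
\begin{equation*}
\frac{\bar\lambda\,m}{\bar m}\,(\q_i-c) \;=\; \sum_{\substack{1\le j\le n\\ j\ne i}}\frac{m_j(\q_i-\q_j)}{r_{ij}^3},
\end{equation*}
which is exactly the central configuration equation for $(\q_1,\ldots,\q_n)$ with multiplier $\lambda=\bar\lambda m/\bar m$. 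Then $r_0^3 = m/\lambda = \bar m/\bar\lambda = \bar r_0^3$, so $r_0 = \bar r_0$, yielding the stated $h=\sqrt{r_0^2-r^2}$ and $r<r_0$.

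The converse is immediate from extension type III of Theorem \ref{thm:class}: given a co-circular sub central configuration with $r<r_0$ and $\q_0$ on the axis at height $\sqrt{r_0^2-r^2}$, all $r_{i0}$ equal $r_0$, so $\bar\q$ is central. That $m_0$ may be arbitrary then follows from the Corollary to Theorem \ref{thm:cn=0}. The one real point of the proof is recognising the normal/tangential split together with the cancellation of $(\q_i-O)$-terms; once these are in hand, there is no serious obstacle, and all remaining work is straightforward bookkeeping.
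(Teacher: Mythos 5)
Your proof is correct, but it takes a genuinely different route from the paper's on the necessity direction. The paper simply cites Albouy and Ouyang--Xie--Zhang for the fact that the base of a pyramidal central configuration must itself be central, and then invokes Theorem \ref{thm:cn=0} to extract co-circularity, $r<r_0$ and the height formula. You instead re-derive that fact from scratch: splitting the equation at each base vertex into components normal and tangential to the base plane, the normal component immediately gives $r_{i0}^3=\bar m/\bar\lambda$ for every $i$ (hence co-circularity about the foot of the perpendicular and $h=\sqrt{\bar r_0^2-r^2}$), and the tangential component, after the cancellation of the $(\q_i-O)$ terms, is exactly the central configuration equation for the base with multiplier $\lambda=\bar\lambda m/\bar m$, which yields $r_0=\bar r_0$. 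I checked the two projection computations (normal component of $\q_i-\bar c$ equal to $-m_0h/\bar m$, and tangential component equal to $\bigl(m(\q_i-c)+m_0(\q_i-O)\bigr)/\bar m$) and they are right; the derived $\lambda$ is automatically $U/I$ by the usual homogeneity identity, so it matches the paper's normalization. What your version buys is self-containedness -- it makes the proposition independent of the external references and shows transparently where co-circularity and the identity $r_{i0}=r_0$ come from; what the paper's version buys is brevity, since Theorem \ref{thm:cn=0} already packages the equidistance condition $r_{i0}^{-3}=\lambda/m$ once centrality of the base is known. Your sufficiency argument and the observation that $m_0$ is arbitrary coincide with the paper's.
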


\begin{remark}
	When studying five-body pyramidal configurations, Fay\c{c}al \cite{Fay96} showed that the base must be co-circular and that the top mass is arbitrary. She  also gave a formula for the distance between the top vertex and the base vertices.   Ouyang-Xie-Zhang \cite{OXZ04} generalized Fay\c{c}al's result to   $n$-body pyramidal configurations. 
	Their characterization is almost the same as ours. They did not    compare explicitly the two values $r$ and $r_0$.   Albouy  \cite{Alb03} obtained a general proposition on central configurations in $R^N$.  Restricted in $R^3$,  it  immediately implies the base of any pyramidal central configuration is central and co-circular.    
\end{remark}

Stacked central configurations are also related with \emph{perverse solutions} introduced by Chenciner \cite{Che01}.  
A solution $\q(t)=\q_1(t), ..., \q_n(t)$  of the $n$-body problem with masses $m_1, m_2, ...,  m_n$ is called a perverse  solution if there  exists another system of masses, $m'_1, m'_2,$ $ ...,  m'_n$, for which $\q(t)$ is still a solution.  Note that any 
$(n+1)$-body non-collinear  central configuration obtained  from an $n$-body  central configuration by adding one mass at the mass center of the $n$ bodies,  i.e., by  way I and IV  of Theorem \ref{thm:class},  would provide perverse solutions, namely, the relative equilibrium and the total collision solution for the planar case, and the  total collision solution for the spatial case, see also Section \ref{subsec:dron}. 


\section{Proofs of the Main Results} \label{sec:pf}
We first simplify the central configuration equations \eqref{equ:cc0}. 

Note that there is  a simple but important fact:  the three points $c, \bar c,$ and $\q_0$ are collinear.  In fact,  $\bar c$ can also be seen as the mass center of the two material points  $c, \q_0$ with masses $m, m_0$.  Thus,  $\bar c$ equals $\q_0$ if and only if  $ c$ equals $\q_0$.  The collinearity  of the three points is also revealed in the following equalities 
\begin{equation} \label{equ:mc}
\bar m (\bar c-\q_0 ) =  m ( c-\q_0 ), \  A-\bar c = (A-  c)- \frac{m_0}{\bar m} (\q_0 -c), \ 
\end{equation}
where $A$ is an arbitrary point.

We assume that both  the  $(n+1)$-body configuration  and the  $n$-body sub  configuration are central.  The central configuration equations \eqref{equ:cc0} can be written as 
\begin{equation} \label{equ:cc1}
\begin{cases}
&\sum_{j\ne i, j=1}^n m_im_j \frac{\q_j-\q_i}{|\q_j-\q_i|^3}= - \lambda m_i(\q_i-c),  i=1, ..., n,   \cr
&      \sum_{i=1}^n \frac{m_im_0(\q_i-\q_0)}{r_{i0}^3} = - \bar \lambda m_0(\q_0-\bar c),                           \cr
&  \sum_{j\ne i, j=1}^n m_im_j \frac{\q_j-\q_i}{|\q_j-\q_i|^3} + \frac{m_im_0(\q_0-\q_i)}{r_{i0}^3} = - \bar \lambda m_i(\q_i-\bar c), \ i= 1, ..., n, 
\end{cases}
\end{equation}
where $ \lambda = U / I  $  and  $ \bar \lambda =\bar U / \bar I $. 

Note that the third  part  of system \eqref{equ:cc1}  can be written as 
\begin{equation} \label{equ:form1}
\lambda m_i(\q_i-c)+   \frac{m_0m_i(\q_i-\q_0)}{r_{i0}^3} =  \bar \lambda m_i(\q_i-\bar c), \ i= 1, ..., n.
\end{equation} Summing up all the $n$ equations gives the second part of system \eqref{equ:cc1}.  Furthermore,  
by \eqref{equ:mc}, the $n$ equations are equivalent to 
\begin{equation}\label{equ:form2}
( \lambda- \bar \lambda + \frac{m_0}{r_{i0}^3}) (\q_i-c)=  ( \frac{m_0}{r_{i0}^3} -  \frac{\bar \lambda m_0}{\bar m} ) (\q_0- c), \ \ i= 1, ..., n.
\end{equation}
Therefore, system \eqref{equ:cc0}     is equivalent to the following system,
\begin{equation} \label{equ:cc3}
\begin{cases}
&\sum_{j\ne i, j=1}^n m_im_j \frac{\q_j-\q_i}{|\q_j-\q_i|^3}= - \lambda m_i(\q_i-c),  i=1, ..., n,   \cr
&( \lambda- \bar \lambda + \frac{m_0}{r_{i0}^3}) (\q_i-c)=  ( \frac{m_0}{r_{i0}^3} -  \frac{\bar \lambda m_0}{\bar m} ) (\q_0- c), \ i= 1, ..., n, 
\end{cases}
\end{equation}
where $ \lambda = U / I  $  and  $ \bar \lambda =\bar U / \bar I $.

\begin{proof} [Proof of Theorem \ref{thm:collinear}]
	Note that 
	equations \eqref{equ:form2} can be written as
	\begin{equation}\notag 
	( \lambda- \bar \lambda) (\q_i-\q_0)  + \frac{m_0(\q_i-\q_0)}{r_{i0}^3} +( \lambda- \frac{\bar \lambda}{\bar m}m )( \q_0-c)  =0, \ i= 1, ..., n.
	\end{equation}
	Assume that all the particles are on the $x$-axis, and use $x_i$ to denote the position of $m_i$.  
	The equations become 
	\begin{equation}\label{equ:ecc2}
	( \lambda- \bar \lambda) (x_i-x_0)  + \frac{m_0(x_i-x_0)}{|x_i-x_0|^3} +( \lambda- \frac{\bar \lambda}{\bar m}m )( x_0-c)  =0, \ i= 1, ..., n.
	\end{equation}
	Let $y_i=x_i-x_0$,  $\a=( \lambda- \frac{\bar \lambda}{\bar m}m )( x_0-c)$, and $\b=\lambda- \bar \lambda$.  The above equation  implies that each of  the $n$ distinct real nonzero  values $\{y_1, ..., y_n\}$ satisfies a common algebraic equation, 
	\begin{equation*}
	\b  y+ \frac{m_0y}{|y|^3} +\a =0. 
	\end{equation*}
	Assume that there are $k$ particles on the left side of $m_0$ and $n-k$ particles on the right, $0\le k\le n$. 
	Then 	the cubic equation 
	\begin{equation}\label{equ:ecc3}
	-m_0z^3  + \a z+\b  =0,
	\end{equation} has at least  $k$ distinct negative roots, and  the cubic equation 
	\begin{equation}\label{equ:ecc4}
	m_0z^3  + \a z+\b  =0
	\end{equation}
	has at least  $n-k$ distinct positive roots. 

	Assume that $z_1, z_2, z_3$ are the roots of   equation \eqref{equ:ecc3} and $z_4, z_5, z_6$ are the roots of  equation \eqref{equ:ecc4}.  
	We are going to finish the proof by showing  that the sum of the number of  negative roots of equation \eqref{equ:ecc3} and the  number of  positive  roots of equation \eqref{equ:ecc4} is not greater than $2$. This is obviously true if $\a=0$ or $\b=0$. So we assume that $\a\ne 0$ and  $\b\ne0$. 

	Recall that a generic cubic equation  $ az^{3}+bz^{2}+cz+d=0,  \  a\ne 0$
	has  only one real root  and two conjugate imaginary roots if and only if 
	\[ \Delta =18abcd-4b^{3}d+b^{2}c^{2}-4ac^{3}-27a^{2}d^{2} <0. \]
	For our two cubic equations, we have 
	\[  \Delta_1=4m_0\a^3 -27m_0^2\b^2, \   \Delta_2=-4m_0\a^3 -27m_0^2\b^2.    \]
	Obviously, at least one of $\Delta_1$ and  $\Delta_2$  is negative. Without loss of generality, we assume that $\Delta_1 < 0$, then only one of $z_1, z_2, z_3$ is real, say, $z_1$,  and the other two  are  conjugate imaginary numbers.
	
	By Vieta's formulas,  
	$ z_4+z_5+z_6= 0$, which implies that at most two of the roots of equation \eqref{equ:ecc4}  are positive. Thus, we  assume that $z_1<0$.  By Vieta's formulas, 
	$z_4z_5z_6=-z_1z_2z_3>0, z_4+z_5+z_6= 0$, 
	which implies that equation \eqref{equ:ecc4} has only one positive root.  In words, 	the sum of  the number of  negative roots of equation \eqref{equ:ecc3} and the  number of  positive  roots of equation \eqref{equ:ecc4} is not greater than $2$.  This completes the proof. 

\end{proof}

\begin{proof} [Proof of Theorem \ref{thm:c=0}]
	The proof of the necessary conditions:  First, 	the  $n$-body sub  configuration must be  central.   In this case,  we have   $\q_0=c=\bar c$, so   $\q_i\ne c, \ i=1, ..., n$.   Then the second part of  system \eqref{equ:cc3} implies that  $\frac{1}{|\q_i-\q_0|^3}=\frac{\bar \lambda -\lambda}{m_0}$ for  $i=1, ..., n$.  So we obtain that $|\q_1-\q_0|=|\q_2-\q_0|=... =|\q_n-\q_0| $.
	
	The proof of the sufficient  conditions:   The first part  of system  \eqref{equ:cc3}  obviously  holds.   Since $\q_0=c=\bar c$ and $r_{10}=...=r_{n0}$,    we have 
	\begin{align*}
	&\bar I=  \sum_{i=0} ^n m_i|\q_i-\bar c|^2=\sum_{i=1} ^n m_i|\q_i-c|^2 =I= m r_{10}^2,\\
	&\bar \lambda = \frac{U + \sum_{i=1}^n \frac{m_im_0}{r_{i0}}  }{\bar I }  =\lambda +  m_0 \frac{ m }{ I r_{10}}  = \lambda +  m_0 \frac{ 1 }{ r_{10}^3}, 
	\end{align*}
	which   implies that $\frac{1}{|\q_i-\q_0|^3}=\frac{\bar \lambda -\lambda}{m_0}$ for $i=1, ..., n$. Thus,  the second part of  system  \eqref{equ:cc3} holds and the proof is completed. 
\end{proof}

\begin{proof}[Proof of Theorem \ref{thm:cn=0}]
	The proof of the necessary conditions:  First, 	the  $n$-body sub  configuration must be  central.   There exists some body  not on the line $\overline{c \q_0}$ since the configuration $\bar{\q}$ is non-collinear. Suppose that $\q_k\notin \overline{c \q_0}$, then the $k$-th equation  of the second part  of system \eqref{equ:cc3} holds only if 
	\[  \frac{1}{|\q_k-\q_0|^3}=\frac{\bar \lambda -\lambda}{m_0}= \frac{\bar \lambda}{\bar m}.   \]
	Note that $|\q_k-\q_0|=|\q_j-\q_0|$ also holds if  $\q_k\notin \overline{c \q_0},  \q_j\in \overline{c \q_0}$. 
	By \eqref{equ:form1}, we have 
	\[	\lambda (\q_i-c)+   \frac{m_0(\q_i-\q_0)}{r_{i0}^3} =  \bar \lambda (\q_i-\bar c), \ i= k, j.\]
	Subtracting the two equations, we obtain	
	\begin{align*}
	&  \lambda (\q_k-\q_j) + \frac{m_0(\q_k-\q_0)}{r_{k0}^3} -\frac{m_0(\q_j-\q_0)}{r_{j0}^3} =  \bar \lambda (\q_k-\q_j), \\
	&  \frac{(\q_k-\q_0)}{r_{k0}^3} -\frac{(\q_j-\q_0)}{r_{j0}^3} =  \frac{\bar \lambda -\lambda}{m_0}(\q_k-\q_j)= \frac{(\q_k-\q_0)-(\q_j-\q_0)}{r_{k0}^3}, 
	\end{align*}
	which implies that $|\q_k-\q_0|=|\q_j-\q_0|$. Thus,  we obtain 
	\[  \frac{1}{|\q_i-\q_0|^3}=\frac{\bar \lambda -\lambda}{m_0}= \frac{\bar \lambda}{\bar m}, \  \  i=1, ..., n.  \]
	The equality $\frac{\bar \lambda -\lambda}{m_0}= \frac{\bar \lambda}{\bar m}$ implies that $\frac{ \lambda}{ m}= \frac{\bar \lambda}{\bar m}$, so we have $\frac{1}{|\q_i-\q_0|^3}= \frac{ \lambda}{ m}$ for  $i=1, ..., n$.

	The proof of the sufficient  conditions:   The first part of system  \eqref{equ:cc3}  obviously  holds.  With the condition   $\frac{1}{|\q_i-\q_0|^3}= \frac{ \lambda}{ m},   i=1, ..., n$, we obtain 
	\begin{equation*} 
	\begin{split}
	\frac{\bar \lambda}{\bar m}&=   \bar U /(\bar m  \bar I) = \frac{U + \sum_{i=1}^n \frac{m_0m_i}{r_{i0}}}{\sum_{0 \le i<  j \le n}  m_im_jr_{ij}^2} 
	= \frac{U + \sum_{i=1}^n \frac{m_0m_i}{r_{10}}   }{   mI + \sum_{i=1} ^n m_0m_ir_{i0}^2      } \\
	&=  \frac{\lambda I  + \frac{m_0 \lambda r_{10}^3}{r_{10}}   }{   mI +  m_0m r_{10}^2      } 
	=\frac{ \lambda}{ m}. 
	\end{split}
	\end{equation*}	
	The equality  $\frac{ \lambda}{ m}= \frac{\bar \lambda}{\bar m}$   implies that $ \frac{ \lambda}{ m} = \frac{\bar \lambda -\lambda}{m_0}= \frac{\bar \lambda}{\bar m}$,  so we have 
	$$\frac{1}{|\q_i-\q_0|^3}=\frac{\bar \lambda -\lambda}{m_0}= \frac{\bar \lambda}{\bar m}, \ \  i=1, ..., n.$$   
	Thus,  the second part of  system  \eqref{equ:cc3} holds and the proof is completed. 
\end{proof}

\begin{proof}[Proof of Proposition \ref{prop:Hamp2}]
	In the collinear case,  Theorem \ref{thm:collinear} implies that it is possible if and only if $n=2$.  For the non-collinear case, 	
	 Theorem \ref{thm:c=0} and Theorem \ref{thm:cn=0} implies that 
$r_{01}=...=r_{0n}=r_{12}...=r_{1n} ...= r_{n-1,n}, $
	which happens only  if  the $n+1$ bodies   form a regular polytope.  In $R^3$,  that is, the  equilateral triangle  and the regular tetrahedron.  On the other hand,  it is well-known that these configurations  with arbitrary masses are central. This completes the proof. 
\end{proof}

\begin{proof}[Proof of Theorem \ref{thm:class}]
	It is clear from  Theorem \ref{thm:c=0} and Theorem \ref{thm:cn=0}.
\end{proof}

\begin{proof}[Proof of Corollary \ref{cor:r_0}]
	It is clear from the proof of Theorem \ref{thm:cn=0}.
\end{proof}

\begin{proof}[Proof of Theorem \ref{thm:ccc}]
	Assume that $\q_i=(\cos \th_i, \sin \th_i)$ and $0=\th_1< \th_2 <...< \th_n<2\pi$.  The configuration is  central if and only if    
	\begin{equation}\label{equ:ccce}
	\begin{split}
	\sum_{k\ne j}     m_k ( \frac{1}{r^3_{kj}} -  \frac{\lambda}{m }) (\q_k -\q_j )
	&= \sum_{k\ne j}     m_k S_{kj} (\q_k -\q_j )=0,
	\end{split}
	\end{equation}
	for $ j=1, ..., n, $  where $S_{kj}=	\frac{1}{r^3_{kj}} -  \frac{1}{r_0^3}$. 
	
	
	We first show that the two exterior sides incident with  $\q_1$ are smaller than $r_0$ by contradiction.  Note that the sequence $\{ r_{12}, r_{13}, ..., r_{1n} \}$ is either  monotonic or at first increasing  then decreasing.   
	
	Case I:  $ r_{12}\ge r_0, \  r_{n1} \ge r_0$. Then we have 
	\[ r _{k1}> \min\{r_{12}, r_{n1}\}\ge r_0,  \   \frac{1}{r^3_{k1}} - \frac{1}{r_0^3} <0, \    k= 3, ..., n-1.   \]
	That is, $S_{k1}\le 0$ for $k=2, ..., n$. 	
	Denote by $l$ the line perpendicular with the tangent  of the circle at $\q_1$ ( the dashed line), see Figure 1, left, and by $P_l \u$ the orthogonal projection of vector $\u$ along  the line $l$. Then it  is easy to see  that 
	\[  P_l \left(  \sum_{k\ne 1}     m_k S_{k1}  (\q_k -\q_1 )\right)=   \sum_{k\ne 1}      m_k S_{k1}  P_l  (\q_k -\q_1 )\ne 0.  \]
	Therefore,   the first equation of system \eqref{equ:ccce}, $0=  \sum_{k\ne 1}     m_k S_{k1}  (\q_k -\q_1 )$,  can not hold. This is a  contradiction.

	\begin{figure} [!h] \label{fig:c.cc1}
		\begin{center}
			\includegraphics [width=1 \textwidth] {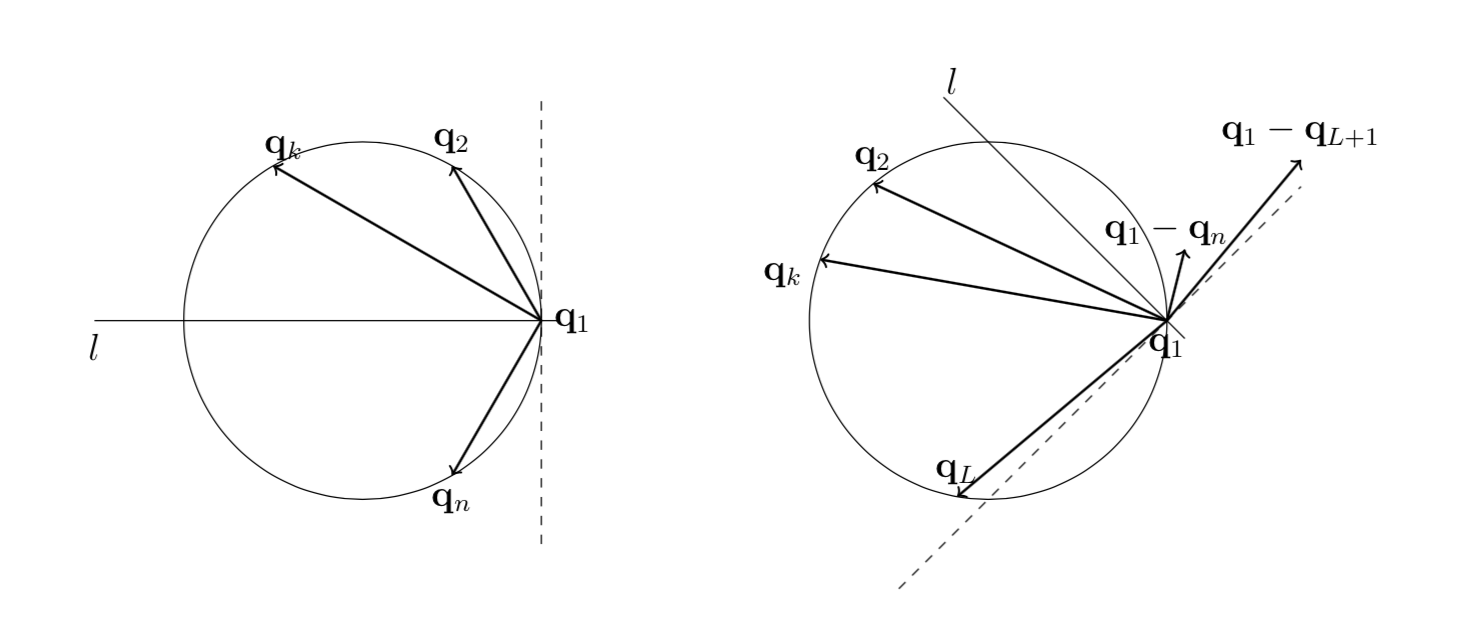}
		\end{center}
		\caption{Co-circular configuration with $ r_{12}\ge r_0, \  r_{n1} \ge r_0$, left,  and  $ r_{12}\ge r_0, \  r_{n1} < r_0$, right.  }
	\end{figure}

	Case II: Only one of the incident exterior side is smaller than $r_0$, say, $ r_{12}\ge r_0, \  r_{n1} < r_0$.  Suppose that $\th_2 < \th_k< ..< \th_L<\th_{L+1}<...< \th_n <2\pi$ and that 
	\[ r_{1L}\ge r_0, \  r_{1, L+1} <r_0.   \] 
	That is, $S_{k1}\le  0$ for $k=2, ..., L$, and  	$S_{k1}>0$ for $k=L+1, ..., n$.  Connect $\q_1$ and one point   between $\q_L$ and $\q_{L+1}$ on the circle by the dashed line, and 	denote by $l$ the line perpendicular with the dashed line, see Figure 1, right. 
	Then it  is easy to see  that 
	\begin{equation*} 
	\begin{split} 
	& P_l \left(  \sum_{k= 2}^L    m_k S_{k1}  (\q_k -\q_1 ) +\sum_{k= L+1}^n    -m_k S_{k1}  (\q_1 -\q_k ) \right)\ne 0.  	\end{split}
	\end{equation*}
	Therefore,   the first equation of system \eqref{equ:ccce}, $0=  \sum_{k\ne 1}     m_k S_{k1}  (\q_k -\q_1 )$,  can not hold. This is a  contradiction.
	
	
	
	We conclude that the two exterior sides incident with  $\q_1$ are smaller than $r_0$, i.e., $S_{12}>0, S_{1n}>0$. If the values  $S_{12}, S_{13}, ..., S_{1n}$  are  all positive, the equation $0=  \sum_{k\ne 1}     m_k S_{k1}  (\q_k -\q_1 )$ can not hold neither, see 
	Figure $1$, left. Thus, there is at least a negative one that must correspond to a diagonal. 
	Hence, there is at least one diagonal incident with $\q_1$ that is larger than $r_0$. 
	
	By symmetry, the statement made for the edges incident with $\q_1$ also holds for the edges incident with any other vertex.  Therefore, we have proved that all the exterior sides are less than $r_0$ and that  there is at least one incident diagonal  larger than $r_0$ at each vertex. 
\end{proof}

\begin{proof}[Proof of Corollary \ref{cor:semicir}]
	If a co-circular configuration lies entirely in a semi-circle, then there is one exterior side longer than all the diagonals. By  Theorem \ref{thm:ccc}, it is not central.  
\end{proof}

\begin{proof}	[Proof of Corollary \ref{cor:4-6ccc}]
	We only  prove for  the six-body  case.  The other cases are similar. 
	Order the six masses  sequentially on the circle as in Figure 2.    First note that  the center of the circle, $O$, must be in the convex hull of the six masses since the  masses are not in a semi-circle.  Assume that $r\ge r_0$. Then 
	Theorem  \ref{thm:ccc}  implies that 
	\[ r_{12}, r_{23}, r_{34}, r_{45}, r_{56}, r_{61}  < r.    \]
			\begin{figure} [h!] \label{fig:c.cc}
		\begin{center}
			\includegraphics [width=0.5 \textwidth] {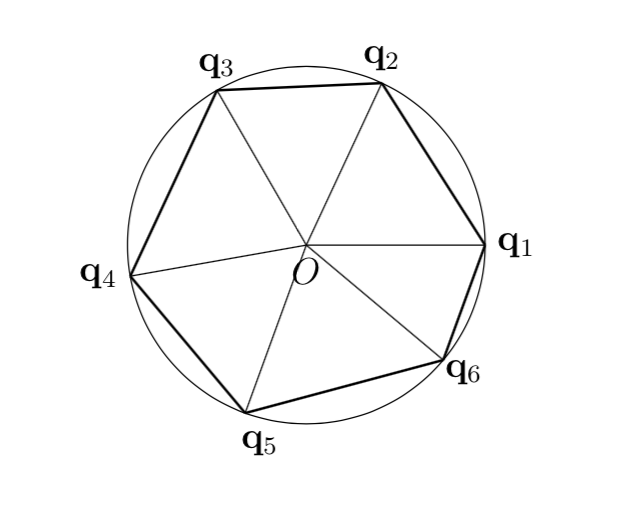}
		\end{center}
		\caption{An example of a co-circular configuration. The center of the circumscribing circle is marked with $O$. }
	\end{figure}
	This  implies that  each of the  six angles $\angle \q_1O \q_2, \angle \q_2O \q_3, \angle \q_3O \q_4,$  $\angle \q_4O \q_5$,  $\angle \q_5O \q_6$, $\angle \q_6O \q_1$,  is strictly  less than $\pi/3$. It is a contradiction since the sum has to be  $2\pi$.

	
\end{proof}

We postpone the proof of  Theorem \ref{thm:4-5} and Proposition \ref{prop:5+}   to Section \ref{sec:345}.

\begin{proof}[Proof of Proposition \ref{prop:pcc}]
	The proof of the necessary conditions:  By Albouy \cite{Alb03}, Ouyang-Xie-Zhang \cite{OXZ04}, the sub configuration $\q$ must be central. 	Thus, both the pyramidal configuration and the sub configuration are central  and $\q_0\ne c$. Then the other conditions follows easily from  Theorem \ref{thm:cn=0}. 
	
	The proof of the sufficient conditions:  By Theorem \ref{thm:cn=0}, if these conditions are satisfied, the pyramidal configuration $\bar \q$ must be central. 
\end{proof}

\section{The extensions of  two, three,  four,  and more  body central configurations}  \label{sec:345}

In this section, we discuss  the extensions of  $n$-body   central configurations to $(n+1)$-body   central configurations 
 for small $n$.  If $n\le 4$, we understand thoroughly  the extensions. If $n\ge 5$, we can only get some partial results. 

\subsection{Two bodies to three} \label{subsec:2-3}   There is only one two-body central configuration, 
namely,  a segment with two arbitrary masses at the ends.  It  is obviously co-circular and the circumscribed circle  is not unique. The radius is in the range $[\frac{r_{12}}{2},  \infty)$. 
\begin{itemize}
	\item I  co-circular to planar: It is easy to see that the mass center coincides the geometric center if and only if the two masses are equal. In this case, we could extend it by adding an arbitrary mass $m_0$ at the center, which is a  three-body collinear central configuration.
	
	\item II co-circular to planar:    Since $r_0=r_{12}$,  the range of radius of the circumscribed circle is $[\frac{r_0}{2},  \infty)$. 
	  It is easy to see that we can extend it by adding  one arbitrary mass $m_0$ on the orthogonal bisector of $\overline{\q_1\q_2}$ such that  $r_{01}=r_{02}=r=r_0=r_{12}$.  The three masses are all arbitrary and the triangle is equilateral. In other words, we have provided another proof of the well-known fact that  the  equilateral triangle with  three arbitrary masses is a central configuration \cite{Lag}. 

		\item III co-circular to pyramidal: Not exist. 
\item IV and V: 	Not exist. 	
		 \end{itemize}

The two-body central configurations  can also  extend to other  three-body collinear central configurations. 
Assume that the  central configuration is on the $x$-axis,  with positions $x_1, x_2$,  $x_1<x_2$.  For any given mass $m_0$, it is easy to show that there is a unique position $x_0$ in each of the three intervals,  $(-\infty, x_1), (x_1, x_2), $ and $ (x_2, \infty)$,  such that the configuration $(x_1, x_2, x_3)$ is  central,   which is the  well-known  three-body  collinear Eulerian central configurations \cite{Eul}.


For  $n\ge 3$, by Theorem \ref{thm:collinear},  any $n$-body  central configuration can not extend to an $(n+1)$-body collinear  central configuration. And that 
  an $n$-body collinear configuration can not extend to an  $(n+1)$-body non-collinear central  configuration by the \emph{perpendicular bisector theorem} \cite{Moe90}. Therefore, we only need to discuss extension of the  $n$-body non-collinear  central configurations in the following cases. 
\subsection{Three bodies to four} This has been considered by Hampton \cite{Ham05}. 
In the three-body case, the only non-collinear  central configuration is the equilateral triangle with three arbitrary masses, which is co-circular.  It is easy to see that $r_0=r_{12}=r_{13}=r_{23}=\sqrt{3}r$.  
\begin{itemize}
	\item I  co-circular to planar: It is easy to see that the mass center coincides the geometric center if and only if the three masses are equal. In this case, we could extend it by adding an arbitrary mass $m_0$ at the center. 
	
	\item II co-circular to planar:  Not exist, since  $r_0>r$. 
	
	\item III co-circular to pyramidal:   As mentioned above, $r_0=\sqrt{3}r$ holds  for any equilateral triangle central configuration. Thus, any equilateral triangle central configuration can extend to a pyramidal central configuration by adding one arbitrary mass $m_0$ such that $r_{10}=r_{20}=r_{30}=r_0=r_{12}$.  In other words, we have provided another proof of the well-known fact that  the  regular tetrahedron  with  four arbitrary masses is a central configuration. 
	\item IV and V: 	Not exist. 	
\end{itemize}

\subsection{Four bodies to five} 
In the four-body case, the only spatial  central configuration is the regular tetrahedron with arbitrary masses, which is co-spherical. On the other hand,  the co-circular central configurations are very rich, and it has been 
studied thoroughly by  Cors-Roberts \cite{CR12}.  
\begin{itemize}
	\item I co-circular to planar:  It has been proved by Hampton \cite{Ham03} that there is only one  four-body co-circular central configuration with mass center at the geometric center, namely, the square with equal masses.  In this case, we could extend it by adding an arbitrary mass $m_0$ at the center. 
	
	\item II co-circular to planar:  Not exist, since that $r_0>r$ for any  four-body co-circular central configuration by Corollary \ref{cor:4-6ccc}. 	
	
	\item III co-circular to pyramidal:  
	Any four-body co-circular central configuration \cite{CR12} could extend to a five-body  pyramidal central configuration. 

	\item IV co-spherical to spatial:   It is easy to see that the mass center of the  regular tetrahedron central configuration coincides the geometric center if and only if the four masses are equal. In this case, we could extend it by adding an arbitrary mass $m_0$ at the center. 
	
	\item V co-spherical to spatial:   Not exist, since that $r_0=r_{12}=\frac{2\sqrt{6}}{3}r$ for any regular tetrahedron central configuration. 			
\end{itemize}

This discussion proves Theorem \ref{thm:4-5}.

\begin{remark}\label{rem:fm2}
	Fernandes-Mello in 2013 \cite{FM13} also announced  that a   four-body co-circular central configuration  can be extended to a  five-body co-planar central configuration if and only if  the configuration is a square with equal masses. 
	However, their original proof is incorrect. On page 302 of \cite{FM13}, 
 where the authors claim that 
	the equation $r^3=\bar m /\bar \lambda $, (with our  notations),  leads to a quadratic polynomial in $m_0$. But from the proof of Theorem \ref{thm:cn=0}, we see that if $r^3=  m/\lambda$, in which no $m_0$ is involved, then $r^3=\bar m /\bar \lambda $ is just an identity for any $m_0$. 	Chen-Hsiao  pointed out this error in 2018 \cite{CH18}. 
	After a preliminary  vision of this paper  was completed,  we were informed that Fernandes-Mello  have corrected their proof in 2018 \cite{FM18}. 
\end{remark}	

  Cors-Roberts \cite{CR12} showed that the  four-body  co-circular  central configurations form a two-dimensional surface. Thus, the   five-body pyramidal central configurations  also  form
  a two-dimensional surface. 
  The property of the  five-body pyramidal central configurations are really rich.  
 We  state some properties about them. They are straightforward corollaries of the results in \cite{CR12}. 
 
 \begin{proposition}
  Not all choices of five  positive masses lead to a five-body pyramidal central configuration. 
 \end{proposition}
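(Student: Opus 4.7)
The plan is to combine Proposition \ref{prop:pcc} with the classification of four-body co-circular central configurations by Cors--Roberts \cite{CR12}. By Proposition \ref{prop:pcc}, a five-tuple of positive masses $(m_0,m_1,m_2,m_3,m_4)$ gives rise to a five-body pyramidal central configuration if and only if the base quadruple $(m_1,m_2,m_3,m_4)$ already admits a four-body co-circular central configuration (with $m_0>0$ arbitrary). So it suffices to exhibit positive quadruples $(m_1,m_2,m_3,m_4)\in\R^4_{>0}$ for which no four-body co-circular central configuration exists.

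As noted in the excerpt just above the statement, Cors--Roberts \cite{CR12} established that the set of four-body co-circular central configurations, viewed modulo the similarity group, forms a two-dimensional surface $\mathcal{S}$ in shape space. Since the central configuration equations \eqref{equ:cc0} are linear in the masses, each shape $\sigma\in\mathcal{S}$ on which they admit a positive mass solution determines the mass ratio $(m_1:m_2:m_3:m_4)$ uniquely up to a positive scalar. This gives a continuous map $\Phi:\mathcal{S}\to \R^4_{>0}/\R_{>0}$, whose target is three-dimensional. Because the continuous image of a two-dimensional set in a three-dimensional space has empty interior, every open subset of $\R^4_{>0}/\R_{>0}$ meets the complement of $\Phi(\mathcal{S})$. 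Consequently almost every positive mass ratio lies outside $\Phi(\mathcal{S})$, and any quadruple realizing such a ratio together with any $m_0>0$ cannot appear as a five-body pyramidal central configuration.

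The one delicate point is to justify that the mass-ratio map $\Phi$ really is well-defined, continuous, and of image dimension at most two; this is exactly the content of the Cors--Roberts parametrization and follows from the fact that the linear system for the masses has a one-dimensional kernel varying continuously with the shape on the open locus where positive solutions exist. If a more concrete argument is preferred, one may instead exhibit an explicit obstruction by starting from the square and perturbing the four masses: a direct computation along the lines of \cite{CR12} shows that the resulting distance constraints on the circle fail for a generic such perturbation, producing positive mass vectors $(m_1,m_2,m_3,m_4)$ outside $\Phi(\mathcal{S})$ and hence outside the image of the pyramidal construction.
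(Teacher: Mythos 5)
Your reduction is the one the paper intends: Proposition \ref{prop:pcc} forces the base of any five-body pyramidal central configuration to be a four-body co-circular central configuration, and the paper then simply cites Cors--Roberts \cite{CR12} (the propositions are stated there as ``straightforward corollaries'' of that work, with no further argument). So the overall route matches. Two steps in your write-up, however, need repair as stated. First, the equations \eqref{equ:cc0} are \emph{not} linear in the masses: $\lambda=U/I$ is a ratio of forms that are quadratic in the masses, and the center $c$ also depends on them. The uniqueness of the mass ratio for a given non-collinear planar four-body shape comes instead from the Dziobek/Laura--Andoyer equations (which are genuinely linear in the masses), or more directly from the explicit mass formulas in \cite{CR12}; you should invoke one of those rather than \eqref{equ:cc0}.

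Second, the assertion that ``the continuous image of a two-dimensional set in a three-dimensional space has empty interior'' is false in general: Peano-type maps send a $2$-disk continuously onto a $3$-cube. What saves the argument is that the Cors--Roberts parametrization of $\mathcal{S}$ and the mass formulas are real-analytic (rational in the mutual distances), so $\Phi$ is locally Lipschitz and its image has Hausdorff dimension at most $2$, hence measure zero and empty interior in the three-dimensional normalized mass space. Alternatively, you can bypass the dimension count entirely: \cite{CR12} already describes the admissible mass set explicitly as a two-dimensional subset of the normalized mass space (and derives concrete inequalities the four base masses must satisfy), which immediately yields quadruples of positive masses admitting no co-circular central configuration. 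With either repair your argument is complete and coincides with the paper's.
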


 \begin{proposition}
 For a  five-body pyramidal central configuration, let $m_1, m_2, m_3, m_4$ be the four masses of  the co-circular  base.    If just two of the four masses are equal, then the base  configuration is symmetric, either a kite or an isosceles trapezoid. 
 If any three of the four masses are equal, then the base configuration is a square and all four masses are necessarily equal.
\end{proposition}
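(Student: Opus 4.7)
The plan is to reduce the statement to a purely four-body co-circular question and then invoke the Cors--Roberts classification \cite{CR12}. By Proposition \ref{prop:pcc}, the base $(\q_1,\q_2,\q_3,\q_4)$ of any five-body pyramidal central configuration is itself a four-body co-circular central configuration with masses $m_1,\dots,m_4$, and conversely every such base extends to a pyramidal central configuration (the top mass $m_0$ being arbitrary). Thus the mass-equality hypotheses transfer verbatim from the five-body pyramid to its four-body base, and the proposition becomes a statement about four-body co-circular central configurations alone.

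Next I would recall the relevant output of \cite{CR12}. Cors--Roberts show that the manifold of four-body co-circular central configurations carries two natural symmetry strata: the one-parameter families of kites (two pairs of adjacent equal masses at mirror-symmetric positions, including the square) and isosceles trapezoids (two pairs of equal masses placed symmetrically with respect to a diameter). Moreover they prove the rigidity statements I need: (a) if exactly two of the four masses coincide, then either those two masses form the symmetric pair of an isosceles trapezoid, or they sit at the two vertices on the axis of symmetry of a kite, so the configuration is symmetric; (b) if three masses coincide, the additional symmetry forces all four positions to be equivalent under the dihedral action, collapsing the configuration to the square with four equal masses. Both facts can be read off from the equations parametrizing their two-dimensional surface of co-circular central configurations, once one notes that a single mass equality imposes a codimension-one symmetry and that two independent mass equalities overdetermine the surface unless the symmetry is maximal.

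Assembling the pieces, I would first apply Proposition \ref{prop:pcc} to drop from five-body pyramids down to four-body co-circular central configurations, and then quote the two rigidity results of \cite{CR12} described above, so that ``exactly two equal masses'' yields a kite or isosceles trapezoid and ``three equal masses'' yields the equal-mass square. The main obstacle, and really the only nontrivial content, lies in step (b) of the Cors--Roberts argument: verifying that the extra algebraic relation produced by a third mass equality is inconsistent with any four-body co-circular central configuration other than the square. Since the authors explicitly call the proposition a straightforward corollary of \cite{CR12}, I would handle this by pointing to the explicit parametrization in that paper rather than redoing the elimination; no new calculation beyond checking that the kite and the isosceles trapezoid each degenerate to the square precisely when a third mass is added to the equal pair is required.
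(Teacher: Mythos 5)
Your proposal matches the paper's (essentially unwritten) argument exactly: the paper offers no proof beyond declaring these statements ``straightforward corollaries of the results in \cite{CR12},'' and the intended route is precisely your reduction via Proposition \ref{prop:pcc} to the four-body co-circular base followed by the Cors--Roberts symmetry and rigidity theorems. One small factual slip worth fixing: in the kite case of \cite{CR12} the two equal masses are the pair swapped by the reflection (off the axis of symmetry), not the two vertices lying on the axis, but this does not affect the structure of the argument.
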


\subsection{Five and more bodies} 
In the five and more body case, both the co-circular and co-spherical  central configurations are rich,  but   much less  research has been done in this direction. We only state some results known to us.  
 
\begin{itemize}
	\item I co-circular to planar:  Obviously, the regular $n$-gon  $(n\ge 5)$ with equal masses are examples.   We could extend them by adding an arbitrary mass $m_0$ at the center. However, until  now,  we do not know that whether there exist other examples or not,  since the question of Chenciner remains unsolved for $n\ge5$, see the comment  after Theorem \ref{thm:class}. 
	
	
	\item II co-circular to planar:  Not exist for five and six-body case, since that $r_0>r$ in  these cases 
	by Corollary \ref{cor:4-6ccc}.  For more bodies, we have not  found any such example yet. 	
	
	\item III co-circular to pyramidal:  
	Any five and six-body  co-circular central configuration  could extend to a six and seven-body pyramidal central configuration. 
	 For more bodies, we have no general results, see Subsection \ref{subsec:gon}.

	\item IV co-spherical to spatial:   For the five to ten-body cases, We  have some examples of  co-spherical central configurations whose mass center equals the geometric center, see Section \ref{subsec:dron}.   In those cases, we could extend it by adding an arbitrary mass $m_0$ at the center. 
	
	\item V co-spherical to spatial:   For the five and more body case, We do not know that whether there  exist  co-spherical central configurations with $r=r_0$ or not. 
\end{itemize}
This discussion proves Proposition \ref{prop:5+}.



\section{Regular polygons and some examples of  co-spherical central configurations} \label{sec:co-s}

In this section,  we discuss  the regular  polygonal central configurations and construct some   co-spherical central configurations.  Some of them  have mass center at the sphere center. Thus, they can  extend by adding one mass at the center.

\subsection{ Regular polygons }\label{subsec:gon}
 Consider the    regular $n$-gon with equal masses.   Obviously,  they can extend to planar central configurations by adding one mass at the center.   Whether   they can   extend to pyramidal central configurations depends on the measurement of   $r_0$ and $r$.   The  following result  was first showed  by Ouyang-Xie-Zhang \cite{OXZ04}. 
 
 \begin{proposition}
 	The regular $n$-gon with equal  masses can extend to pyramidal central configurations 
 	if and only if $n\le 472$. 
 \end{proposition}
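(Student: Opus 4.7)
The plan is to reduce the question, via Proposition \ref{prop:pcc}, to an explicit inequality between two easily computable quantities, and then analyze that inequality. Since the regular $n$-gon with equal masses is a co-circular central configuration whose mass center coincides with the circumcenter, Proposition \ref{prop:pcc} tells us that a pyramidal extension exists exactly when $r<r_0$, where $r$ is the circumradius and $r_0=(m/\lambda)^{1/3}$ is the quantity attached to the base central configuration. To compute $\lambda$ I would exploit the cyclic symmetry: placing $\q_1=(r,0)$ and $\q_k=r(\cos(2\pi(k-1)/n),\sin(2\pi(k-1)/n))$ with unit masses, the force at $\q_1$ is radial, and using $|\q_k-\q_1|=2r\sin(\pi(k-1)/n)$ together with $\cos(2\theta)-1=-2\sin^2\theta$ yields
\[ \lambda=\frac{S_n}{4r^3}, \qquad S_n:=\sum_{k=1}^{n-1}\csc(k\pi/n). \]
Since the total mass is $n$, we get $r_0^3/r^3=4n/S_n$, so the condition $r<r_0$ becomes the single clean inequality $f(n):=S_n/n<4$.

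It then remains to show that $f$ is strictly increasing and tends to infinity, which reduces the result to a numerical identification of the crossover. Divergence and the asymptotic I would extract from the partial fraction expansion $\pi\csc(\pi z)=\tfrac{1}{z}+2z\sum_{j\ge 1}(-1)^j/(z^2-j^2)$ combined with Euler--Maclaurin on the two resulting harmonic tails; a direct but careful computation then produces the sharp asymptotic
\[ f(n)=\frac{2}{\pi}\bigl(\ln n+\gamma+\ln(2/\pi)\bigr)+O(1/n). \]
Solving the asymptotic equation $f(n)=4$ gives $\ln n=2\pi-\gamma+\ln(\pi/2)\approx 6.158$, i.e.\ $n\approx 471.6$; evaluating $f(472)$ and $f(473)$ numerically then confirms that $472$ is the threshold.

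The hardest step will be the strict monotonicity of $f$, because the asymptotic alone does not exclude sporadic $n>472$ where $f$ happens to dip below $4$. My plan is either to turn the above asymptotic into a quantitative two-sided bound so that $f(n+1)>f(n)$ is forced for all $n$ beyond a manageable threshold (with a finite check below), or to argue directly from the convexity of $\csc$ on $(0,\pi)$ via a pairing of the summands of $S_n$ and $S_{n+1}$. Everything else in the proof is symmetry and elementary trigonometry.
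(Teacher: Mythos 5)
Your proposal is correct and follows essentially the same route as the paper: reduce via the pyramidal characterization to the inequality $r<r_0$, compute $r_0^3/r^3=4n/\big(\sum_{k=1}^{n-1}\csc(k\pi/n)\big)$ using the symmetry of the regular $n$-gon, and combine monotonicity with the logarithmic asymptotic and a numerical check at $n=472,473$. The only difference is that the monotonicity and asymptotic expansion you flag as the hardest step are not proved in the paper either --- they are quoted from Moeckel--Sim\'o \cite{MS95} --- so your plan to establish monotonicity from scratch would, if carried out, make the argument more self-contained than the original.
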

\begin{proof}

   Assume that $m_k=1$,  the radius of the circle is  $1$, and that  the  positions are 
 $\q_k=  e^{ \sqrt{-1} \th_k}, \  \   \th_k= \frac{2k\pi}{n}, \ k=1, ..., n. $
 Then we have 
 \begin{equation*} 
 \begin{split}
 r_0^3&=   \frac{ \sum_{j, k=1} ^n  m_jm_k r_{jk}^2 } { \sum_{j,k=1, j\ne k} ^n  m_jm_k /r_{jk}  } =    \frac{ \sum_{ k=1} ^{n-1} |1-\rho_k|^2 } {\sum_{ k=1} ^{n-1} \frac{1}{|1-\rho_k|} }=\frac{n}{ A(n)},
 \end{split}
 \end{equation*}
 where $A(n)=\frac{1}{2}\sum_{ k=1} ^{n-1} \frac{1}{|1-\rho_k|}= \frac{1}{4}\sum_{ k=1} ^{n-1} \csc(\frac{k\pi}{n})$.
 
 It has been found by  Moeckel-Sim\'o  \cite{MS95} that $\frac{ n}{A(n)}$  is decreasing, and  that the asymptotic expansion of $\frac{A(n)}{n}$ for $n$ large  is 
 		$$\frac{A(n)}{n}\sim 
 		\frac{1}{2\pi}\left(\gamma + \log \frac{2n}{\pi}  \right) + \sum_{k\ge1} \frac{(-1)^k(2^{2k-1}-1)B^2_{2k}\pi^{2k-1}}{(2k)(2k)!n^{2k}},    $$
 where $\gamma$ and $B_{2k}$ stand for the Euler-Mascheroni constant and the Bernoulli numbers respectively.

 	Computation by Matlab shows
 \[  \frac{ 472}{A(472)} \approx 
 1.001, \ \   {\rm while} \ \ 
 \frac{ 473}{A(473)} \approx 
 0.9998.   \]
 So $r_0=(\frac{n}{ A(n)})^\frac{1}{3}  > r$ if and only if $n\le 472. $  This  completes the proof. 
	\end{proof}

\subsection{Co-spherical central configurations}\label{subsec:dron}

There are much less research on  co-spherical central configurations, compared with the co-circular ones. 
Corbera-Llibre-P\'{e}rez \cite{CLP14} constructed three families of central configurations,  each consisting  of  a regular polyhedron  and its dual. In each family, there is a co-spherical one, and the  mass center equals its geometric center.  

We construct some co-spherical central configurations  related with  some co-circular ones.  Let us introduce some \textbf{notations} that will be used only in this subsection. For co-circular central configurations,  we denote by $r, r_0$ the radius of the circumscribing circle and  the cubic root of the ratio of total mass and the multiplier respectively.  These planar configurations will extend to co-spherical central configurations. For the co-spherical ones,  we denote by $R, R_0$ the radius of the circumscribing sphere and  the cubic root of the ratio of total mass and the multiplier respectively. In this subsection, the mass at the top vertex of an $(n+1)$-body  pyramidal configuration is denoted by  $m_{n+1}$ and the position by $\q_{n+1}$.

We want to construct  co-spherical  central configurations that can extend  by  way IV and V of Theorem \ref{thm:class}. That is, we want the mass center to be  at the sphere center, or, $R_0=R$.

\subsubsection{Pyramidal central configurations}

Recall that an $(n+1)$-body  pyramidal central configuration is obtained by adding one arbitrary mass $m_{n+1}$ to a co-circular central configuration with the property $r_0>r$. The top vertex $\q_{n+1}$  is  on the orthogonal axis passing through the center of the circle, and the height   is $h= \sqrt{   r_0^2 -r^2}$.  Obviously, pyramidal configurations are co-spherical.

\begin{proposition}
	Let $\q$ be an $(n+1)$-body pyramidal central configuration.  Assume that the mass center of the co-circular base   is at the center of the circumscribing circle.  We  can choose $m_{n+1}$ such that the mass center of the pyramidal central configuration coincides with the circumscribing sphere center  if and only if $r_0>\sqrt{2}r$. 

\end{proposition}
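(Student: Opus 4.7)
The plan is to set up coordinates, compute both the sphere center and the mass center explicitly, and observe that equating them forces an algebraic relation that has a positive solution for $m_{n+1}$ precisely when $r_0 > \sqrt{2}\,r$.

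First I would place the circle circumscribing the base in the plane $z=0$ with its center at the origin, so that by hypothesis the mass center of $(m_1,\ldots,m_n)$ sits at the origin as well. By Proposition \ref{prop:pcc}, the apex lies on the $z$-axis at $\q_{n+1} = (0,0,h)$ with $h=\sqrt{r_0^2-r^2}$. The mass center of the full $(n+1)$-body pyramidal configuration is then
\begin{equation*}
\bar c = \Bigl(0,0,\ \frac{m_{n+1} h}{m+m_{n+1}}\Bigr),
\end{equation*}
since the base contributes nothing to the $z$-coordinate.

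Next I would locate the circumscribing sphere. Since it must be invariant under the rotational symmetry fixing the axis, its center $(0,0,z_0)$ lies on the $z$-axis, and the conditions that it has equal distance $R$ to the base vertices and to the apex give $r^2+z_0^2=(h-z_0)^2$, hence
\begin{equation*}
z_0 = \frac{h^2-r^2}{2h}.
\end{equation*}

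Equating $\bar c_z = z_0$ and clearing denominators yields $2m_{n+1}h^2 = (m+m_{n+1})(h^2-r^2)$, which simplifies to
\begin{equation*}
m_{n+1} = m\cdot \frac{h^2-r^2}{h^2+r^2}.
\end{equation*}
Since $m>0$ and the denominator $h^2+r^2>0$, this expression gives a positive mass if and only if $h^2>r^2$, that is, $r_0^2-r^2 > r^2$, or equivalently $r_0>\sqrt{2}\,r$. Conversely, for any value of $r_0>\sqrt{2}\,r$ this choice of $m_{n+1}$ is admissible and produces a pyramidal central configuration (by Proposition \ref{prop:pcc}, the top mass is arbitrary) whose mass center coincides with the sphere center. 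The borderline case $r_0=\sqrt{2}\,r$ forces $m_{n+1}=0$, and $r_0<\sqrt{2}\,r$ forces $m_{n+1}<0$, neither of which is allowed.

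There is no real obstacle here beyond the coordinate setup; the only subtlety is checking that the axis of symmetry of the base (on which both candidates lie) is indeed the correct line for both the sphere center and the configuration's mass center, which follows from the symmetry and from the hypothesis that the base mass center equals the base geometric center.
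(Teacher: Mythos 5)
Your proof is correct and follows essentially the same route as the paper: both locate the sphere center on the axis at height $(h^2-r^2)/(2h)$ and the mass center at height $m_{n+1}h/(m+m_{n+1})$, and observe that the two can agree for positive $m_{n+1}$ precisely when $h>r$, i.e.\ $r_0>\sqrt{2}\,r$. The only difference is that where the paper says it is ``easy to find $m_{n+1}$'' for sufficiency, you supply the explicit value $m_{n+1}=m(h^2-r^2)/(h^2+r^2)$, which is a welcome refinement.
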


		\begin{figure} [!h] \label{fig:triangle}
		\begin{center}
		\includegraphics [width=0.9 \textwidth] {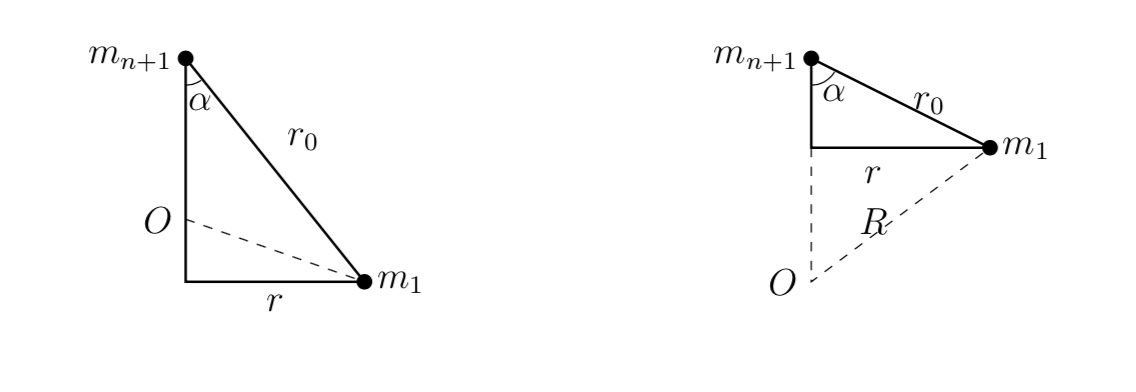}
	\end{center}
		\caption{ Slice of  a pyramidal central configuration. The centre of the circumscribing sphere is marked with $O$.   }
	\end{figure}

\begin{proof}
  Obviously, the sphere center $O$ is between the base and $m_{n+1}$   if and only if  $h= \sqrt{   r_0^2 -r^2}>r$, see Figure 3, left.
    That is, $r_0>\sqrt{2}r$.    The mass center is always between the base and $m_{n+1}$.  Thus, to make the mass center equals the sphere center, we must have $r_0>\sqrt{2}r$. 
  
  On the other hand, if $r_0>\sqrt{2}r$, it is easy to find $m_{n+1}$ such that the two centers  equal. 
	\end{proof}

\begin{proposition}
	Let $\q$ be an $(n+1)$-body pyramidal central configuration extended from an $n$-body co-circular central configuration.  Then $R_0=R$ if and only if $r_0=\frac{2}{\sqrt{3}} r$.
\end{proposition}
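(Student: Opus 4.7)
The plan is to exploit two structural facts and then reduce the claim to a short algebraic identity. First, by Corollary \ref{cor:r_0}, because the pyramidal extension falls under way III of Theorem \ref{thm:class}, the cubic root of the ratio of total mass to the multiplier is preserved: $R_0 = \bar r_0 = r_0$. Thus the problem collapses to computing the circumscribing sphere radius $R$ of the pyramid in terms of $r$ and $r_0$.

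Next I would locate the center of the circumscribing sphere. Place the base on the plane $z=0$ with the circle's center at the origin, so the base points satisfy $|\q_i|^2 = r^2$ for $i = 1,\dots,n$, and by Proposition \ref{prop:pcc} the apex is $\q_{n+1} = (0,0,h)$ with $h = \sqrt{r_0^2 - r^2}$. Any point equidistant from all $n$ base points must lie on the $z$-axis, so the sphere center has the form $(0,0,z)$. The equation $\sqrt{r^2 + z^2} = h - z$ (assuming $z<h$) yields $z = \tfrac{h^2 - r^2}{2h}$, and hence
\begin{equation*}
R \;=\; h - z \;=\; \frac{h^2 + r^2}{2h} \;=\; \frac{r_0^2}{2\sqrt{r_0^2 - r^2}},
\end{equation*}
where the last step uses $h^2 + r^2 = r_0^2$.

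Finally, I would set $R_0 = R$, i.e.\ $r_0 = \dfrac{r_0^2}{2\sqrt{r_0^2 - r^2}}$, which is equivalent to $2\sqrt{r_0^2 - r^2} = r_0$, hence $4(r_0^2 - r^2) = r_0^2$, i.e.\ $3r_0^2 = 4r^2$, i.e.\ $r_0 = \tfrac{2}{\sqrt{3}}\,r$. Both directions follow immediately from this chain of equivalences. No step is a genuine obstacle; the only point requiring minor care is checking that the ansatz $R = h - z$ with $z<h$ is consistent (which it is, since the sphere center must lie on the segment from the base's center to the apex whenever $h>0$ and the base has nonzero radius).
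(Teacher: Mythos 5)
Your proof is correct and takes essentially the same route as the paper: both reduce the claim via Corollary \ref{cor:r_0} to $R_0=r_0$ and then express the circumradius of the pyramid as $R=r_0^2/\bigl(2\sqrt{r_0^2-r^2}\bigr)$, the paper doing this by the inscribed-angle picture of its Figure 3 and you by an explicit coordinate computation. One harmless inaccuracy: your closing remark that the sphere center always lies on the segment from the base's center to the apex is false when $h<r$ (then $z<0$), but all your algebra needs is $h-z>0$, which your own formula confirms.
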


\begin{proof}
By Corollary \ref{cor:r_0}, we see that $R_0=r_0$.   	Note that $r_0 = R$ if and only if that $ \a= 60^\circ$, or, $\sin \a = \frac{r}{r_0}= \frac{\sqrt{3}}{2}$, see Figure 3, right. 
\end{proof}

\textbf{Examples}: 
Consider the  central configurations of  regular $n$-gon with equal masses.  Suppose that the $m_1=1$,  the radius of the circle  is $1$, and that  the  positions are $ \q_k=  e^{ \sqrt{-1} \th_k}, \  \   \th_k= \frac{2k\pi}{n}, \ k=1, ..., n. $    Recall that $\frac{r_0}{r}=(\frac{n}{ A(n)})^\frac{1}{3} $,  and it  is decreasing with respect to $n$.

Computation by Matlab shows 
\[  (\frac{8}{ A(8)})^\frac{1}{3}>\sqrt{2}, \  (\frac{9}{ A(9)})^\frac{1}{3}<\sqrt{2},   \  \  (\frac{52}{ A(52)})^\frac{1}{3}> \frac{2}{\sqrt{3}},  \ (\frac{53}{ A(53)})^\frac{1}{3}< \frac{2}{\sqrt{3}}.  \]   

We can draw two conclusions about the  $(n+1)$-body pyramidal central configurations extended from the regular $n$-gon central configurations 
$(n\le 472)$,  see Figure \ref{fig:s2cc},  left. 
\begin{enumerate}
	\item $R_0\ne R$;
	\item Only for $n=3, 4, 5, 6, 7, 8$, we can choose a proper top  mass  to make that the mass center of the pyramidal central configuration coincides with the sphere center. They can extend to $(n+2)$-body central configuration by adding one arbitrary mass at the center. As commented after Proposition \ref{prop:pcc}, the total collision solutions associated with them are   perverse solutions of the $(n+2)$-body problem \cite{Che01}. This 
	was noticed first by 	Ouyang-Xie-Zhang \cite{OXZ04}. 
\end{enumerate}
\subsubsection{Bi-Pyramidal central configurations}
By bi-pyramidal  configurations, we mean   configurations of $n+2$ bodies of which $n$ bodies  are co-planar and the other two being off the plane and in opposite directions. 
The regular $n$-gon with equal masses also generates  $(n+2)$-body bi-pyramidal  co-spherical central configurations.    Similar construction has been considered by Zhang-Zhou \cite{ZhZh01}. 

Place the  $n$-gon with equal masses on the equator, and two equal masses at the north and south pole, see Figure \ref{fig:s2cc}, right.  Assume that  the masses are $m_1=1, ...,m_n= 1, m_{n+1}=m_{n+2}=a$,  and the positions are 
 $ \q_k=( \cos \th_k, \sin \th_k, 0 ),  \ \th_k= \frac{2k\pi}{n},  k=1, ..., n, \ 
  \q_{n+1}=( 0,0,1 ),\     \q_{n+2}=( 0,0,-1 ).$ 
   \begin{figure}[!h]
		\begin{center}
		\includegraphics [width=1 \textwidth] {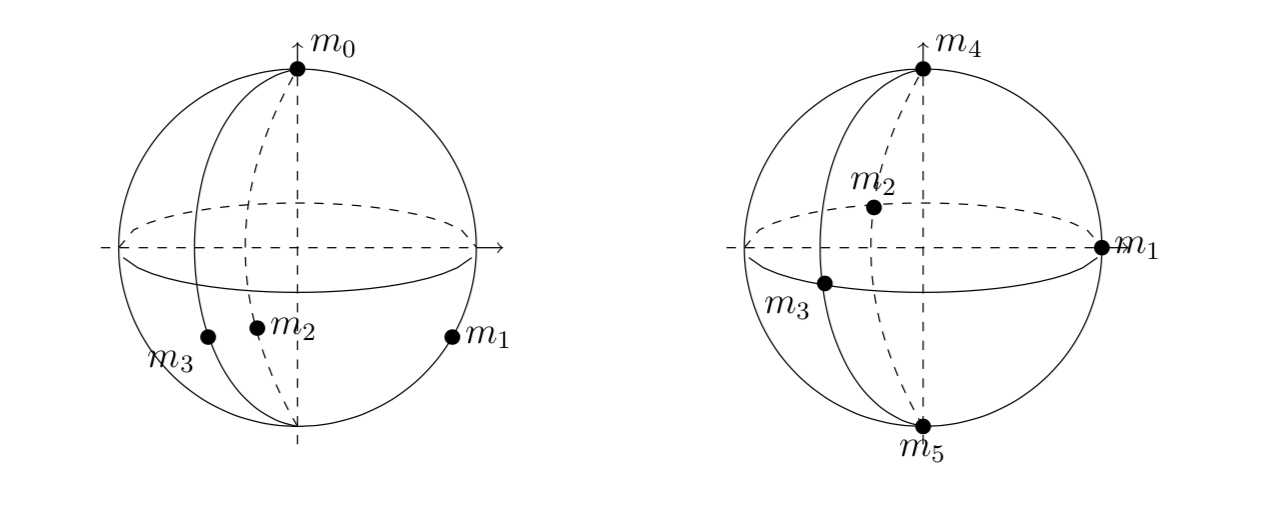}
	\end{center}
	\caption{Examples of co-spherical  central configuration generated from the equilateral triangle configuration.   }
	\label{fig:s2cc}  
\end{figure}

The  mass center is at the origin. The symmetry reduces the central configuration equations to the following system, 
\begin{align*}
& -\lambda \q_1=  \sum_{i=2}^n\frac{\q_k-\q_1}{r_{k1}^3} +   a  \frac{\q_{n+1} -\q_1}{r_{n+1,1}^3} +a  \frac{\q_{n+2}-\q_1}{r_{n+1,1}^3}=    -( A(n)+  \frac{a}{\sqrt{2}})\q_1,                                            \\
&  -\lambda \q_{n+1} =\sum_{i=1}^n\frac{\q_k-\q_{n+1}}{r_{n+1,k}^3} +    a\frac{\q_{n+2}-\q_{n+1}}{r_{n+1,n+2}^3}= -( \frac{n}{2\sqrt{2}}+\frac{a}{4}    )\q_{n+1}.   
\end{align*}
Here we use the fact that  $r_{n+1, 1}=...=r_{n+1, n}=\sqrt{2},  r_{n+1, n+2}=2$ and that  the sub configuration on the equator is central,  
\[ \sum_{i=2}^n\frac{\q_k-\q_1}{r_{k1}^3} =   - \frac{1}{r_0^3} (\sum_{i=1}^n m_i) \q_1=    - \frac{A(n)}{n} n \q_1.      \]
The system holds for positive $a=m_{n+1} $ if and only if $\frac{n} {A(n)}> 2\sqrt{2}.$ We have showed that this happens 
if and only if  $3\le n\le 8. $

\begin{proposition}
	The bi-pyramidal $(n+2)$-body configurations  constructed above are central with positive masses 
	if and only if $3\le n\le 8$. 
\end{proposition}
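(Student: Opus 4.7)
The plan is to verify in detail the two reduced equations already displayed in the excerpt and then solve them explicitly for $a$. By the symmetry group generated by the $\mathbb{Z}_n$-rotations about the polar axis and the reflection swapping the two poles, the $(n+2)$ vector central configuration equations are equivalent to the two scalar equations coming from the bodies $\q_1$ and $\q_{n+1}$. I would double-check these by a direct computation: $r_{n+1,k} = \sqrt{2}$ for $1 \le k \le n$ and $r_{n+1,n+2} = 2$ from the unit-sphere placement; the equator sub-configuration being central with equal masses gives $\sum_{k\ne 1} \frac{\q_k-\q_1}{r_{k1}^3} = -A(n)\,\q_1$; and $\sum_{k=1}^n \q_k = 0$ collapses the sum over the equator bodies at $\q_{n+1}$. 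Adding the contributions of the polar masses $a$ reproduces the displayed pair
\begin{equation*}
\lambda = A(n) + \frac{a}{\sqrt{2}}, \qquad \lambda = \frac{n}{2\sqrt{2}} + \frac{a}{4}.
\end{equation*}

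Next, I would eliminate $\lambda$ and solve the resulting linear equation in $a$:
\begin{equation*}
a\left(\frac{1}{\sqrt{2}} - \frac{1}{4}\right) = \frac{n}{2\sqrt{2}} - A(n).
\end{equation*}
Since the coefficient on the left, $\frac{4-\sqrt{2}}{4\sqrt{2}}$, is strictly positive, this admits a positive solution for $a = m_{n+1} = m_{n+2}$ if and only if $\frac{n}{A(n)} > 2\sqrt{2}$.

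Finally, to pin down the range of $n$, I would invoke the monotonicity of $\frac{n}{A(n)}$ from Moeckel--Sim\'o, already used in Subsection \ref{subsec:gon}: the sequence is strictly decreasing in $n$. A direct numerical evaluation of $A(n) = \frac{1}{4}\sum_{k=1}^{n-1} \csc(k\pi/n)$ gives
\begin{equation*}
\frac{8}{A(8)} > 2\sqrt{2} > \frac{9}{A(9)},
\end{equation*}
so combined with the trivial lower bound $n \ge 3$ (needed even to have a regular polygon) one obtains exactly $3 \le n \le 8$.

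The argument is essentially the computation already outlined in the excerpt; the only non-routine step is the boundary check of the two inequalities at $n=8$ and $n=9$, which is immediate but unavoidable since it determines the precise cutoff. Conceptually there is no obstacle beyond this.
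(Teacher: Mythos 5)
Your proposal is correct and follows essentially the same route as the paper: the symmetry reduction to the two scalar equations at $\q_1$ and $\q_{n+1}$, elimination of $\lambda$ to get a linear equation for $a$ with positive coefficient, and the criterion $\frac{n}{A(n)}>2\sqrt{2}$ settled by the Moeckel--Sim\'o monotonicity together with the numerical check at $n=8,9$ (which the paper performs in the equivalent form $(\tfrac{n}{A(n)})^{1/3}$ versus $\sqrt{2}$ in Subsection \ref{subsec:dron}). No gaps.
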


For all of them, the mass center equals the sphere center. Thus, they extend to $(n+3)$-body configurations by adding one arbitrary mass at the center. 
 Direct computation  shows  $R_0>R=1$ for all of them.

\section{Conclusions}

We have  classified the extensions of  
 $n$-body central configurations to    $(n+1)$-body central configurations in $R^3$.  For the collinear case,  the extensions happen only if  $n= 2$, so it is well understood.  
 For the non-collinear case, 
  the $n$-body  central configurations must be co-circular or co-spherical.    The co-circular (co-spherical) central configurations can extend if the mass center equals the geometric center, or $r_0\ge r$ ($r_0=r$ for the co-spherical case).  
  We also obtain a property on the value of $r_0$ for co-circular central configurations.  This  enables us to prove  the inequality $r_0>r$  for all  four, five and six-body  co-circular  central configurations. We solve the two questions of Hampton completely.  
  It might be worth noting that most of our proof remains valid for more general potentials and higher dimensional spaces.

There exist many  research works  on  
co-circular central configurations. 
We hope that this work may spark similar  interest to the co-spherical ones. 
The value $r_0=(\frac { m}{ \lambda})^{\frac{1}{3}}$ has showed its importance in the study of  four and five-body planar convex  central configurations \cite{CH18, MB32}.
 Our work reveals its another role 		
 in  the study  of central configurations.      Many  questions arise for the value $r_0$. For example, except from the trivial case $n=2$, do there exist co-circular or co-spherical central configurations with the property $r_0=r$? Can one obtain some general property of $r_0$ for the co-spherical  central configurations?
We hope to explore some of these questions in future work.

\section{acknowledgments}
Xiang Yu is supported by  NSFC(No.11701464) and the Fundamental Research Funds for the Central Universities (No. JBK1805001).  Shuqiang Zhu is supported by NSFC(No.11801537, No.11721101) and the Fundamental Research Funds for the Central Universities 
(No.WK0
010450010).

\end{document}